\newtheorem{thm}{Theorem}[section]
\newtheorem{lem}[thm]{Lemma}
\newtheorem{prop}[thm]{Proposition}
\newtheorem{Quest}{Question}
\theoremstyle{definition}
\numberwithin{equation}{section}
\newcommand{\supp}{\mathrm{supp}}
\newcommand{\dd}{\mathrm{d}}
\newcommand{\spec}{\mathrm{sp}}
\newcommand{\mi}{\mathrm{I}}
\renewcommand{\u}{\beta}
\renewcommand{\v}{\gamma}
\newcommand{\w}{\delta}
\newcommand{\eg}{\text{e.g.\,}}
\newcommand{\ie}{\text{i.e.\;\,}}
\begin{document}
\title{On the question "Can one hear the shape of a group?" and Hulanicki type theorem for graphs}
\author{ {\bf Artem Dudko
\thanks{A. Dudko acknowledges the support by the National Science Centre, Poland,
grant 2016/23/P/ST1/04088 under POLONEZ programme which has received funding from the EU\;\protect\includegraphics[width=.03\linewidth]{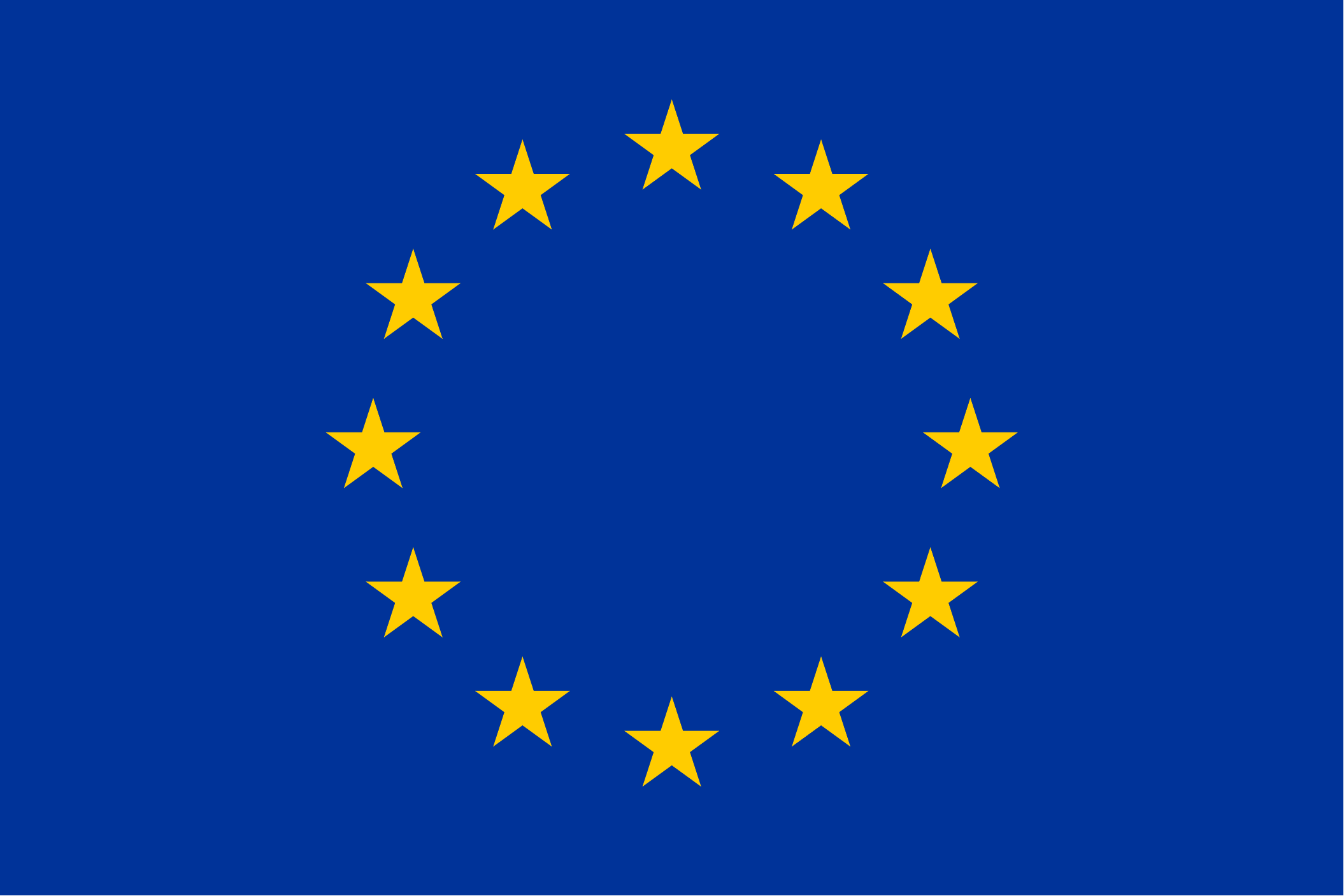} Horizon 2020 research and innovation programme under the MSCA grant agreement No. 665778}}
\\%Intsitute of Mathematics Polish Academy of Sciences
                    IM PAN, Warsaw, Poland  \\
          adudko@impan.pl \\
         {\bf Rostislav Grigorchuk
          \thanks{Rostilav Grigorchuk was partially supported by NSF grant DMS-1207699 and by Simons Foundation  Collaboration Grant for  Mathematicians,  Award Number  527814}}\\        Texas A\&M University, College Station, TX, USA  \\      grigorch@math.tamu.edu }

\date{}

\maketitle
\begin{abstract} We study the question of whether or not it is possible to determine a finitely generated group $G$ up to some notion of equivalence from the spectrum $\spec(G)$ of $G$. We show that the answer is "No" in a strong sense. As a first example we present the collection of amenable 4-generated groups $G_\omega$, $\omega\in\{0,1,2\}^\mathbb N$, constructed by the second author in 1984. We show that among them there is a continuum of pairwise non-quasi-isometric groups with  $\spec(G_\omega)=[-\tfrac{1}{2},0]\cup[\tfrac{1}{2},1]$. Moreover, for each of these groups $G_\omega$ there is a continuum of covering groups $G$ with the same spectrum. As a second example we construct a continuum of $2$-generated torsion-free step-3 solvable groups with the spectrum $[-1,1]$. In addition, in relation to the above results, we prove a version of  the Hulanicki Theorem about inclusion of spectra for covering graphs.
\end{abstract}
%\maketitle
\section{Introduction}
The first part of the title is related to the famous paper of Mark Kac \cite{Kac-Shape-66}: "Can one hear the shape of a drum?". The above question can be traced back to Lipman Bers and Hermann Weyl. It concerns a plane domain $\Omega$ with piecewise smooth boundary (a \emph{drum}) and the Laplace operator
$$\Delta=-\left(\frac{\partial ^2}{\partial x^2}+\frac{\partial ^2}{\partial y^2}\right).$$ The question is whether or not the domain $\Omega$ can be determined (up to isometry) from the spectrum of $\Delta$. In \cite{GordonWebbWolpert-Isospectral-92} the authors gave a negative answer by constructing a pair of regions in the plane of different shapes but with identical eigenvalues of the Laplace operator.

In this paper we consider the analogous question for the Laplace operator on a Cayley graph of an infinite finitely generated group. Given a finitely generated group $G$ with a symmetric generating set $S$, the Laplace operator of the Cayley graph $\Gamma=\Gamma(G,S)$ of $G$ acts on $l^2(G)$ by
\begin{equation}\label{EqLapCayley}(\Delta f)(g)=|S|-\sum\limits_{s\in S}f(s^{-1}g).
\end{equation} The Laplace operator is strongly related to the Markov operator $$M=1-\tfrac{1}{|S|}\Delta,\;\;(Mf)(g)=\tfrac{1}{|S|}\sum\limits_{s\in S}f(s^{-1}g),$$ corresponding to the simple random walk on $G$. The spectrum of $M$ is called the spectrum of the group $G$ and is denoted by $\spec(G)$. A natural question, inspired by the classical  question about the shape of a drum, is whether or not it is possible to determine the group $G$, up to some notion of similarity, from the spectrum of $G$. This question is discussed in \cite{Valette-ShapeGroup-96} (see also \cite{HarpeRobertsonValette-SpectrumGroupI-93}, \cite{HarpeRobertsonValette-SpectrumGroupII-93} and \cite{Fujiwara-ShapeGroup-16}). As the author of \cite{Valette-ShapeGroup-96} points it out, it is easy to see that it is not possible to determine $G$ up to isomorphism just from the spectrum of $G$. For example, for $\mathbb Z^n$, $n\in\mathbb N$, with $S=\{e_1,\ldots,e_n,-e_1,\ldots,-e_n\}$, where $\{e_i\}_{i=1,\ldots,n}$ is the standard basis of $\mathbb Z^n$, the spectrum is equal to $[-1,1]$ and so does not depend on $n$. Other examples are non-isomorphic groups with isomorphic Cayley graphs. Such examples were constructed for instance in \cite{BozejkoDykemaLehner-Cayley-06}. Notice also that even a finite group can have non-isomorphic Cayley graphs (corresponding to different generating sets) with the same spectrum (see \eg \cite{AbdollahiJanbazGhahramani-CospectralCayley-17}, \cite{Babai-SpectraCayley-79} and \cite{Lubotzky-IsospectralGraphs-06}).

Nevertheless, the spectrum of a group $G$ (in fact, even the spectral radius of the Markov operator $r(M)=\max |\{z:z\in\sigma(M)\}|$) can give a valuable information about its structure. In \cite{Kesten-BanachMean-59,Kesten-RandomWalk-59} Kesten showed that for a group generated by a  symmetric finite set $S$ one has
$$\frac{\sqrt{2n-1}}{n}\leqslant r(M)\leqslant 1,\;\;\text{where}\;\;|S|=2n.$$ Moreover, $r(M)=1$ if and only if $G$ is amenable and $r(M)=\frac{\sqrt{2n-1}}{n}$ for $n\geqslant 2$ if and only if $G$ is free on $S_0$ with $S=S_0\sqcup S_0^{-1}$.

However, many questions about spectra of groups remain open. One of them is: what can be the shape of the spectrum of a group? It is known that the spectrum of a finitely generated group $G$ can be an interval or a union of two intervals (as shown in \cite{DudkoGrigorchuk-Spectrum-17}). But it is not known whether it can be a union of $n\geqslant 3$ disjoint intervals, a countable set of points accumulating to a point, or a Cantor set. We notice that in \cite{Kuhn-Anisotropic-92} Gabriella Kuhn constructed for every $n\geqslant 3$ a non-amenable group $G_n$ generated by a finite set $S_n$ and a Markov operator corresponding to some non-uniformly distributed probabilities on $S_n$ with the spectrum equal to a disjoint union of $n$ intervals, but it remains an open question whether the same can be achieved with all probabilities equal to $\frac{1}{|S_n|}$.

Other natural questions are:
 \begin{itemize}\item{} given a certain closed subset $\Sigma$  of $[-1,1]$ how many finitely generated groups have the spectrum equal to $\Sigma$?
\item{} how does the spectrum change under group coverings?
\end{itemize} In this paper we address certain aspects of the latter two questions.

 Our starting point is a construction of a continuum of groups generated by four involutions with spectra equal to the same union of two intervals.
 These groups are from the family of groups $G_\omega,\omega\in\Omega=\{0,1,2\}^\infty$, introduced by the second author in \cite{Grigorchuk-DegreesGrowth-84}. The most known of these groups is the group $\mathcal G=G_{\omega_0}$, where $\omega_0$ is the periodic sequence $012012\ldots$ The groups $G_\omega$ are amenable groups generated by four involutions $a,b_\omega,c_\omega,d_\omega$ which we introduce in Section \ref{SecGomega}. One of the results of \cite{Grigorchuk-DegreesGrowth-84} is that the collection of groups $G_\omega$, $\omega\in\Omega$, contains a continuum of groups with pairwise nonequivalent in the Schwarz-Milnor sense growth functions (and thus with pairwise non-quasi-isometric Cayley graphs). Let $\Omega_2$ be the subset of $\Omega$ consisting of sequences with at lease two symbols from $\{0,1,2\}$ occurring infinitely many times. We show:
\begin{thm}\label{ThSpecGrig}  For every $\omega\in\Omega_2$ one has $\spec(G_\omega)=[-\tfrac{1}{2},0]\cup [\tfrac{1}{2},1]$.
\end{thm}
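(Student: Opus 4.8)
\medskip
\noindent\textbf{Proof proposal.}
The inclusion $\spec(G_\omega)\subseteq[-\tfrac12,0]\cup[\tfrac12,1]$ is ``soft'' and holds for every $\omega$. Since $a,b_\omega,c_\omega,d_\omega$ are involutions and $\{\id,b_\omega,c_\omega,d_\omega\}$ is a Klein four-group, $b_\omega+c_\omega+d_\omega$ acts as $+3$ on $\fix(b_\omega)\cap\fix(c_\omega)\cap\fix(d_\omega)$ and as $-1$ on its orthocomplement, i.e.\ $b_\omega+c_\omega+d_\omega=4P_\omega-\id$ where $P_\omega:=\tfrac14(\id+b_\omega+c_\omega+d_\omega)$ is the projection onto that common fixed subspace; with $E_\omega:=\tfrac12(\id-a)$ the projection onto the $(-1)$-eigenspace of $a$, a direct check gives
\[
M_\omega=\tfrac14(a+b_\omega+c_\omega+d_\omega)=P_\omega-\tfrac12E_\omega .
\]
By Halmos' two-subspaces theorem, off a part on which $P_\omega$ and $E_\omega$ each act as $0$ or $\id$ the pair is in generic position, $P_\omega\cong\bigl(\begin{smallmatrix}1&0\\0&0\end{smallmatrix}\bigr)$ and $E_\omega\cong\bigl(\begin{smallmatrix}C&\sqrt{C(\id-C)}\\[1pt]\sqrt{C(\id-C)}&\id-C\end{smallmatrix}\bigr)$ for a positive contraction $C$; there $M_\omega$ is a $2\times2$ matrix over $C^*(C)$ of constant trace $\tfrac12$ and determinant $-\tfrac12(\id-C)$, hence of spectrum $\{\tfrac14\pm\tfrac14\sqrt{9-8t}:t\in\spec(C)\}\subseteq[-\tfrac12,0]\cup[\tfrac12,1]$, while the reducible part contributes only scalars in $\{-\tfrac12,0,\tfrac12,1\}$. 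The \emph{same} dichotomy applies verbatim to the finite-dimensional pair $\bigl(\pi_n(P_\omega),\pi_n(E_\omega)\bigr)$ on $\mathbb C[L_n]$, $L_n$ being the $n$-th level of the rooted binary tree $T$ on which $G_\omega$ acts: $\spec(\pi_n(M_\omega))$ equals $\{\tfrac14\pm\tfrac14\sqrt{9-8t}:t\in\spec(C_\omega^{(n)})\}$ up to finitely many of those scalars, where $C_\omega^{(n)}:=\bigl(\pi_n(P_\omega)\pi_n(E_\omega)\pi_n(P_\omega)\bigr)\big|_{\mathrm{Ran}\,\pi_n(P_\omega)}$. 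Since $t\mapsto\tfrac14\mp\tfrac14\sqrt{9-8t}$ are homeomorphisms of $[0,1]$ onto $[-\tfrac12,0]$ and onto $[\tfrac12,1]$, \thmref{ThSpecGrig} follows from two assertions: \textbf{(a)} $\spec(\pi_n(M_\omega))\subseteq\spec(G_\omega)$ for all $n$; and \textbf{(b)} $\overline{\bigcup_{n}\spec(C_\omega^{(n)})}=[0,1]$.

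For \textbf{(a)}: the groups $G_\omega$ with $\omega\in\Omega_2$ are amenable (here the hypothesis is used), and $\pi_n$ factors through the finite group acting on $L_n$, so $\ell^2(L_n)$ is weakly contained in $\ell^2(G_\omega)$; this is the easy ``inclusion'' direction of the Hulanicki-type theorem for the covering $\Gamma(G_\omega,S)\to\Gamma_n^\omega$ of the Cayley graph onto the finite Schreier graph, a version of which is established in this paper. For \textbf{(b)}, I run the self-similar renormalization. Splitting $\mathbb C[L_{n+1}]=\mathbb C[L_n]\oplus\mathbb C[L_n]$ along the two maximal subtrees, $\pi_{n+1}(a)=\bigl(\begin{smallmatrix}0&I\\I&0\end{smallmatrix}\bigr)$, and --- using $b_\omega+c_\omega+d_\omega=4P_\omega-\id$ together with the fact that at every level exactly one of $b_\omega,c_\omega,d_\omega$ has trivial section over one subtree (the other two each contributing $\pi_n(a)$), while over the other subtree the three sections are the corresponding generators of $G_{\sigma\omega}$ ---
\[
\pi_{n+1}(P_\omega)=\begin{pmatrix}\tfrac12(I+\pi_n(a))&0\\[2pt]0&\pi_n(P_{\sigma\omega})\end{pmatrix},\qquad \pi_{n+1}(E_\omega)=\tfrac12\begin{pmatrix}I&-I\\-I&I\end{pmatrix},
\]
\emph{independently of the first letter $\omega_1$}. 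A short computation with these two block projections, using $\pi_n(E_{\sigma\omega})=\tfrac12(I-\pi_n(a))$, gives the exact recursion
\[
\spec\bigl(C_\omega^{(n+1)}\bigr)=\bigl\{\tfrac{1\pm\sqrt{1-t}}{2}:t\in\spec\bigl(C_{\sigma\omega}^{(n)}\bigr)\bigr\}=\ell^{-1}\!\bigl(\spec(C_{\sigma\omega}^{(n)})\bigr),\qquad\ell(s)=4s(1-s).
\]
As $\pi_0(a)=\id$ forces $\pi_0(E_\omega)=0$, hence $\spec(C_\omega^{(0)})=\{0\}$, iteration gives $\spec(C_\omega^{(n)})=\ell^{-n}(\{0\})$ for every $n$ (in particular independent of $\omega$). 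The map $\ell$ is the parameter-$4$ logistic map, which is locally eventually onto $[0,1]$, so the backward orbit of $0$ is dense in $[0,1]$; this is \textbf{(b)}, and together with \textbf{(a)} and the first paragraph it proves $\spec(G_\omega)=[-\tfrac12,0]\cup[\tfrac12,1]$.

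I expect the load-bearing step to be \textbf{(a)} rather than \textbf{(b)}: one must pin down amenability of $G_\omega$ for $\omega\in\Omega_2$ and arrange the Hulanicki/covering statement so that the finite-level spectra genuinely sit inside $\spec(G_\omega)$. A second, routine-but-necessary chore is to track the reducible ``corner'' summands in the two-projections decomposition, so that applying the dichotomy termwise neither loses nor adds a point of $[-\tfrac12,0]\cup[\tfrac12,1]$ (the only spectral values of $C_\omega^{(n)}$ not visibly coming from a generic $2\times2$ block are $0$ and $1$, which are harmless). The renormalization identity is a finite linear-algebra check; the one useful point is that it is uniform over the three symbols $0,1,2$, because the common fixed subspace of $b_\omega,c_\omega,d_\omega$ over a subtree equals $\fix(\pi_n(a))$ regardless of which generator carries the trivial section. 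An alternative, more combinatorial proof of \textbf{(b)} would construct explicit transfer-matrix eigenfunctions on the ``necklace-like'' finite Schreier graphs $\Gamma_n^\omega$.
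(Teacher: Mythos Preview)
Your proposal is correct and follows the same two-step skeleton as the paper: an upper bound coming from the fact that $a$ and $\tfrac12(b_\omega+c_\omega+d_\omega-1)$ are commuting-free involutions, and a lower bound obtained by pushing the spectra of the finite Schreier graphs $\Gamma_n$ into $\spec(G_\omega)$ via amenability.

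The packaging differs in two places worth noting. For the upper bound, the paper does not invoke Halmos' two-subspaces theorem; instead it observes that $s\mapsto\lambda_G(a)$, $t\mapsto\lambda_G(\tfrac12(b_\omega+c_\omega+d_\omega-1))$ is a unitary representation of the infinite dihedral group $D_\infty$, applies Hulanicki's theorem to the amenable group $D_\infty$ to get weak containment in $\lambda_{D_\infty}$, and then computes $\sigma(\lambda_{D_\infty}(\tfrac14 s+\tfrac12 t+\tfrac14))\subset[-\tfrac12,0]\cup[\tfrac12,1]$ by an elementary norm estimate. Your Halmos argument is the operator-theoretic shadow of this: the generic $2\times2$ block you write down is exactly the restriction to a $D_\infty$-irreducible, and the $\{-\tfrac12,0,\tfrac12,1\}$ corners are the four one-dimensional $D_\infty$-representations. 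Either route proves the inclusion for \emph{every} $\omega$, without using amenability of $G_\omega$.

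For the lower bound, the paper simply cites the Bartholdi--Grigorchuk computation (recorded here as Proposition~\ref{PropSpecGamma}) that $\overline{\bigcup_n\spec(\Gamma_n)}=[-\tfrac12,0]\cup[\tfrac12,1]$, and then uses the Weak Hulanicki Theorem for graphs (Theorem~\ref{ThGraphCov}, case~$a)$) for the covering $\Gamma(G_\omega,S_\omega)\to\Gamma_n$. You instead re-derive the Schreier spectra from scratch via the recursion $\spec(C^{(n+1)}_\omega)=\ell^{-1}(\spec(C^{(n)}_{\sigma\omega}))$ with $\ell(s)=4s(1-s)$; this is precisely the substitution dynamics underlying the cited result, and your observation that the block form of $\pi_{n+1}(P_\omega)$ is the same for all three values of $\omega_1$ is what makes the answer independent of $\omega$. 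So your step~\textbf{(b)} is a self-contained reproof of Proposition~\ref{PropSpecGamma} rather than a different argument. Your step~\textbf{(a)} is exactly the paper's use of Theorem~\ref{ThGraphCov}, and your expectation that this is the load-bearing step is accurate: it is here (and only here) that the hypothesis $\omega\in\Omega_2$ enters, via intermediate growth and hence amenability of $G_\omega$.
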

\noindent The above theorem shows that the answer to the question in the title is "No" even when instead of isomorphism one considers a very weak notion of equivalence of groups: quasi-isometry. We notice that $\spec(G_\omega)=[-\tfrac{1}{2},0]\cup [\tfrac{1}{2},1]$ for every $\omega\in \Omega$, but for us it is convenient to consider only $\omega\in \Omega_2$.

In \cite{DudkoGrigorchuk-Spectrum-17}, Theorem 2, the authors already showed that the spectrum of the group $\mathcal G=G_{\omega_0}$, where $\omega_0=(012)^\infty\in\Omega_2$, is  equal to $[-\tfrac{1}{2},0]\cup [\tfrac{1}{2},1].$
\noindent Theorem \ref{ThSpecGrig} can be proven similarly to Theorem 2 from \cite{DudkoGrigorchuk-Spectrum-17}. In \cite{GrigorchukPerezNagnibeda-Schreier-18} Grigorchuk, P\'erez and Smirnova-Nagnibeda, using different methods, found the spectra of the so called spinal groups which include the groups $G_\omega,\omega\in \Omega$, and the result given in Theorem \ref{ThSpecGrig} interferes with Theorem 1.2 from \cite{GrigorchukPerezNagnibeda-Schreier-18}. In this paper we obtain Theorem \ref{ThSpecGrig} as a corollary of the following:
\begin{thm}\label{PropGrigSpec} Let $G$ be an amenable group generated by four involutions $\tilde a,\tilde b,\tilde c,\tilde d$ such that $\tilde b\tilde c\tilde d=1$. Assume that for some $\omega\in\Omega_2$ there exists a surjection $\varphi:G\to G_\omega$ such that $\varphi(\tilde a)=a,\varphi(\tilde b)=b_\omega,\varphi(\tilde c)=c_\omega$ and $\varphi(\tilde d)=d_\omega$. Then $\spec(G)=[-\tfrac{1}{2},0]\cup[\tfrac{1}{2},1]$.
\end{thm}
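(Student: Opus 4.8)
\emph{Proof sketch.} The two inclusions have completely different characters. The bound $\spec(G)\subseteq[-\tfrac{1}{2},0]\cup[\tfrac{1}{2},1]$ is purely formal and uses only that $\tilde b\tilde c\tilde d=1$; the reverse inclusion is where amenability and the surjection onto $G_\omega$ enter.

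For the upper bound I work inside $B(\ell^2(G))$ through the left regular representation, writing $\tilde s$ also for $\lambda(\tilde s)$, so the Markov operator is $M=\tfrac{1}{4}(\tilde a+\tilde b+\tilde c+\tilde d)$. From $\tilde b^2=\tilde c^2=\tilde d^2=1$ and $\tilde b\tilde c\tilde d=1$ one gets $\tilde b\tilde c=\tilde d=\tilde c\tilde b$, so $\tilde b,\tilde c,\tilde d$ commute pairwise and $V=\{1,\tilde b,\tilde c,\tilde d\}$ is a subgroup of $G$; since $\varphi$ maps $\tilde b,\tilde c,\tilde d$ onto the three distinct nontrivial elements $b_\omega,c_\omega,d_\omega$, these are distinct and nontrivial in $G$, so $V\cong(\mathbb Z/2)^2$ and $|V|=4$. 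Hence $1+\tilde b+\tilde c+\tilde d=4P$, where $P=\tfrac{1}{4}\sum_{v\in V}\lambda(v)$ is the orthogonal projection onto the $V$-fixed vectors, and therefore
\begin{equation*}
4M-1=\tilde a+2(2P-1)=A+2B,\qquad A:=\tilde a,\quad B:=2P-1,
\end{equation*}
with $A,B$ self-adjoint unitaries. Squaring gives $(4M-1)^2=A^2+4B^2+2(AB+BA)=5+2\big(AB+(AB)^*\big)$, and since $AB$ is unitary the Hermitian operator $AB+(AB)^*$ has norm $\le 2$; thus $\sigma\big((4M-1)^2\big)\subseteq[1,9]$. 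As $M=M^*$, this says $(4t-1)^2\in[1,9]$ for every $t\in\spec(G)$, i.e. $t\in[-\tfrac{1}{2},0]\cup[\tfrac{1}{2},1]$.

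For the lower bound I use amenability of $G$: then $\lambda_G$ weakly contains every unitary representation of $G$, in particular the pullback $\lambda_{G_\omega}\circ\varphi$ on $\ell^2(G_\omega)$. This is exactly the Hulanicki-type inclusion of spectra for the covering of Cayley graphs $\Gamma(G,\{\tilde a,\tilde b,\tilde c,\tilde d\})\to\Gamma(G_\omega,\{a,b_\omega,c_\omega,d_\omega\})$, whose deck group $\ker\varphi$ is amenable. Since $\varphi$ restricts to a bijection of the two generating four-element sets, it intertwines the Markov operators, so weak containment yields $\spec(G_\omega)=\sigma(\lambda_{G_\omega}(M_{G_\omega}))\subseteq\sigma(\lambda_G(M))=\spec(G)$. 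By \thmref{ThSpecGrig} the left-hand side is $[-\tfrac{1}{2},0]\cup[\tfrac{1}{2},1]$, and together with the upper bound this completes the proof.

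I expect the genuine obstacle to be concealed in the appeal to \thmref{ThSpecGrig}: establishing $\spec(G_\omega)=[-\tfrac{1}{2},0]\cup[\tfrac{1}{2},1]$ for all $\omega\in\Omega_2$ (not only for $\omega_0$, handled in \cite{DudkoGrigorchuk-Spectrum-17}) requires analysing the finite Schreier graphs $\Gamma_n$ of the action of $G_\omega$ on the levels of the rooted binary tree, computing $\sigma(M_{\Gamma_n})$ via the self-similar renormalization recursion, and showing that $\overline{\bigcup_n\sigma(M_{\Gamma_n})}$ exhausts $[-\tfrac{1}{2},0]\cup[\tfrac{1}{2},1]$; the passage from these Schreier spectra back to $\spec(G_\omega)$ is again an instance of the amenability/Hulanicki inclusion, now for the coverings $\Gamma(G_\omega)\to\Gamma_n$. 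Everything else above is soft.
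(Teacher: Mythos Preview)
Your argument is correct in substance, and for the upper bound your squaring trick is a slight streamlining of the paper's route. Both arguments rest on the same element: your $B=2P-1$ is exactly the paper's $\tilde t=\tfrac{1}{2}(\tilde b+\tilde c+\tilde d-1)$, a self-adjoint unitary. The paper packages $\lambda_G(\tilde a),\lambda_G(\tilde t)$ as a representation $\pi$ of $D_\infty=\langle s,t\mid s^2=t^2=1\rangle$, invokes Hulanicki (amenability of $D_\infty$) to get $\sigma(\pi(\tfrac14 s+\tfrac12 t+\tfrac14))\subset\sigma(\lambda_{D_\infty}(\tfrac14 s+\tfrac12 t+\tfrac14))$, and then bounds the latter via a factorisation $\tfrac14 s+\tfrac12 t-\lambda=t(\tfrac14 ts-\lambda t+\tfrac12)$. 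You instead compute $(4M-1)^2=5+2(AB+(AB)^*)\in[1,9]$ directly; this avoids the detour through $D_\infty$ and the use of Hulanicki in the upper bound, which is a genuine (if minor) simplification.

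For the lower bound there is a logical issue with the way you present it: you invoke \thmref{ThSpecGrig}, but in the paper \thmref{ThSpecGrig} is obtained \emph{as a corollary} of the very theorem you are proving (take $G=G_\omega$). So as written the argument is circular. Your final paragraph shows you understand the actual content, and indeed the paper's proof does precisely what you sketch there, only without the intermediate stop at $\spec(G_\omega)$: it uses the composite covering $\Gamma(G,\widetilde S)\to\Gamma(G_\omega,S_\omega)\to\Gamma_n$ and the Weak Hulanicki Theorem for graphs (\thmref{ThGraphCov}, case $a)$: amenable graph covering a finite one) to get $\spec(\Gamma_n)\subset\spec(G)$ for every $n$, then Proposition~\ref{PropSpecGamma} gives $\overline{\bigcup_n\spec(\Gamma_n)}=[-\tfrac12,0]\cup[\tfrac12,1]$. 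Your version factoring through $\spec(G_\omega)$ is equivalent once you replace the appeal to \thmref{ThSpecGrig} by this Schreier-graph argument applied to $G_\omega$; the paper simply applies it to $G$ in one step.
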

\noindent As the application of Theorem \ref{PropGrigSpec} we obtain:
\begin{thm}\label{ThMain} For every $\omega\in\Omega_2$ there exists a continuum of amenable groups covering $G_\omega$ and with the spectrum equal to $[-\tfrac{1}{2},0]\cup[\tfrac{1}{2},1]$. Each of these groups is generated by four involutions $a,b,c,d$ satisfying the condition $bcd=1$.
\end{thm}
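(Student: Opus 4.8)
\emph{The plan is as follows.} By \thmref{PropGrigSpec} it suffices to produce a continuum of pairwise non-isomorphic amenable groups, each generated by four involutions $a,b,c,d$ with $bcd=1$ and each admitting an epimorphism onto $G_\omega$ sending $(a,b,c,d)$ to $(a,b_\omega,c_\omega,d_\omega)$. First I would introduce the universal such group
$$H=\langle\, a,b,c,d \mid a^2=b^2=c^2=d^2=bcd=1\,\rangle\ \cong\ C_2*(C_2\times C_2),$$
together with the canonical epimorphism $H\twoheadrightarrow G_\omega$ (which exists because $\{1,b_\omega,c_\omega,d_\omega\}$ is a Klein four-group) and its kernel $N$. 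Then every group of the required shape is a quotient $H/M$ with $M\trianglelefteq H$ and $M\leqslant N$, and conversely every such $H/M$ qualifies: the relations of $H$ descend to it and the images of $a,b,c,d$ stay nontrivial, being already nontrivial in $G_\omega$ and $M\leqslant N$. So the task becomes: find a continuum of pairwise non-isomorphic amenable quotients of this form.

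\emph{Amenable covers.} I would restrict to $M$ with $[N,N]\leqslant M\leqslant N$. Then $N/M$ is abelian, so $H/M$ is an extension of the amenable group $G_\omega=H/N$ by an abelian group, hence amenable. Such $M\trianglelefteq H$ correspond bijectively to $\mathbb Z[G_\omega]$-submodules of the relation module $V:=N/[N,N]$, on which $G_\omega$ acts by conjugation (the $H$-action factors through $G_\omega$ because $N$ acts trivially on the abelian $V$); it is a little cleaner to reduce mod $2$ and use the $\mathbb F_2[G_\omega]$-module $W:=N/[N,N]N^2$, whose submodules give covers $H/M$ that are (elementary abelian $2$-group)-by-$G_\omega$.

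\emph{Getting a continuum.} The crux is to show that $W$ has a continuum of $\mathbb F_2[G_\omega]$-submodules. Here I would use the structure theory of $G_\omega$: for $\omega\in\Omega_2$ the group $G_\omega$ is infinitely presented, and, working from the recursive Lysenok/spinal presentation and the self-similar (branch) decomposition of $G_\omega$, one should see that the relation module is genuinely large --- for instance, that some sub- or quotient module of $W$ is isomorphic to an infinite direct sum $\bigoplus_{n\geqslant 1}S_n$ with each $S_n$ a nonzero $\mathbb F_2[G_\omega]$-module, the $n$th summand coming from the relators attached to the $n$th level of the tree, which survive modulo the lower-level relators, squares, and commutators. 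Then $\{\bigoplus_{n\in T}S_n : T\subseteq\mathbb N\}$ is a continuum of distinct submodules of $W$. I expect this to be the main obstacle: the mere failure of $G_\omega$ to be finitely presented does not by itself force the relation module to be large, so one must genuinely track the relators through the branch structure of $G_\omega$ (and, for those $\omega\in\Omega_2$ for which this structure degenerates, use instead the explicit description of $G_\omega$).

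\emph{Conclusion.} Let $\bar M_t$, $t$ ranging over a Cantor set, be the resulting distinct submodules, $M_t\trianglelefteq H$ their preimages in $N$, and $G_t:=H/M_t$. Each $G_t$ is amenable, generated by four involutions $a,b,c,d$ with $bcd=1$, and surjects onto $G_\omega$ with the prescribed action on generators, so \thmref{PropGrigSpec} gives $\spec(G_t)=[-\tfrac12,0]\cup[\tfrac12,1]$. To see the $G_t$ realise a continuum of isomorphism classes I would count: each $G_t$ is countable, a fixed countable group $\Gamma$ admits at most $|\Gamma|^{4}\leqslant\aleph_0$ epimorphisms from $H$, hence is isomorphic to $G_t$ for at most countably many $t$, while $t\mapsto M_t$ is injective. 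Picking one group per isomorphism class gives the desired continuum of amenable covers of $G_\omega$, each with spectrum $[-\tfrac12,0]\cup[\tfrac12,1]$.
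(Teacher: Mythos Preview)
Your setup coincides with the paper's: the universal group $H\cong C_2*(C_2\times C_2)$, the kernel $N=\ker(H\twoheadrightarrow G_\omega)$, the reduction to finding a continuum of normal subgroups $M\trianglelefteq H$ inside $N$ with amenable $H/M$, and the final counting argument are all exactly what the paper does (with the notation $\Lambda,\Delta_\omega$ in place of $H,N$). The divergence is entirely in how one produces the continuum of such $M$.

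Your proposed route through the relation module has a genuine gap. You ask for a continuum of $\mathbb F_2[G_\omega]$-submodules of $W=N/[N,N]N^2$, and you say that ``working from the recursive Lysenok/spinal presentation \dots one should see that the relation module is genuinely large'', hoping for a direct-sum decomposition indexed by levels. But you do not prove this, and you yourself flag it as ``the main obstacle''. The issue is real: what you need is essentially that the $G_\omega$-coinvariants of $W$ are infinite-dimensional over $\mathbb F_2$ (equivalently, that $H_2(G_\omega;\mathbb F_2)$ is infinite), and while this is believable for $\omega\in\Omega_2$, it requires an actual computation with the presentation --- not just the observation that the presentation is infinite. As written, your argument stops at a plausible heuristic.

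The paper sidesteps this entirely. It first shows (Lemma~\ref{LmLambda'}) that $[H,H]\cong\mathbb F_3$, and then (Lemma~\ref{LmOmega}, using Muntyan's presentation, Theorem~\ref{ThMuntyan}) that $N\subset[H,H]$; hence $N$ is free of infinite rank. Now one invokes Ol'shanski\u{\i}'s theorem (Theorem~\ref{ThOls}): $\mathbb F_\infty$ has a continuum of verbal subgroups $V_i$ with $\mathbb F_\infty/V_i$ locally finite and solvable. Verbal subgroups of $N$ are characteristic in $N$, hence normal in $H$ (since $N\trianglelefteq H$); and $H/V_i$ is (locally finite solvable)-by-$G_\omega$, hence amenable. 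This replaces your unproved module-theoretic claim by a clean citation, at the cost of the extra lemma $N\subset[H,H]$.
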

\noindent
Notice that by \cite{BenliGrigorchukHarpe-AmenableGroups-13} all of the coverings of $G_\omega$  satisfying the conditions of Theorem \ref{ThMain} (in particular, the group $G_\omega$) are not finitely presented.

In addition, using different techniques based on the classical results of P. Hall \cite{Hall-Finiteness-54} and Higson-Kasparov result on correctness of Baum-Connes Conjecture for groups with the Haagerup property (also known as a-T-menable groups) \cite{Higson-Kasparov-OperatorKTheory-97} we prove the following:
\begin{thm}\label{ThSpec[-1,1]} There is a continuum of pairwise non-isomorphic $2$-generated torsion free step-3 solvable groups with the spectrum $[-1,1]$.
\end{thm}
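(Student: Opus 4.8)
\emph{Sketch of a proof strategy.} The argument has two nearly independent parts: a construction of the continuum of groups (this is where Hall's work enters) and a uniform spectral computation valid for every group in the family (this is where Higson--Kasparov enters).

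For the construction, I would realize the groups as $2$-generated quotients of one fixed step-$3$ solvable group. Fix a finitely generated metabelian group $Q=\langle x,y\rangle$ and build a $2$-generated group $G_0=\langle x,y\rangle$ with $G_0''$ abelian (so $G_0$ is solvable of derived length at most $3$) and $G_0/G_0''\cong Q$, where $G_0''$ is viewed as a finitely generated module over the group ring $R=\mathbb{Z}[Q]$ (conjugation action). The crucial classical input is that such an $R$ need not be Noetherian --- this is part of P.\ Hall's analysis of finitely generated soluble groups \cite{Hall-Finiteness-54} --- so the finitely generated $R$-module $G_0''$ can carry a continuum of pairwise distinct submodules $N_\alpha$. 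Put $G_\alpha=G_0/N_\alpha$. Choosing the $N_\alpha$ so that $G_0''$ is not killed, each $G_\alpha$ is $2$-generated of derived length exactly $3$; choosing the coefficients in a torsion-free ring and invoking Hall's residual finiteness and maximal-condition-on-normal-subgroups results for finitely generated metabelian groups, one checks that the $G_\alpha$ stay torsion-free and that the $R$-module $G_\alpha''=G_0''/N_\alpha$ --- which sits inside $G_\alpha$ as a characteristic subgroup with an intrinsic conjugation-module structure --- determines $\alpha$ (e.g.\ through its annihilator or associated primes), making the $G_\alpha$ pairwise non-isomorphic. Finally, arranging $Q$ (and, if needed, replacing $(x,y)$ by a Nielsen transform of itself) so that each $G_\alpha$ surjects onto $\mathbb{Z}/2$ with both generators going to the non-trivial element costs nothing extra.

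Granting the family $\{G_\alpha\}$, the spectral computation is uniform. Fix $G=G_\alpha$, $S=\{x^{\pm1},y^{\pm1}\}$ (four distinct elements --- $G$ is torsion-free and non-cyclic), $M=M_G$. Since $G$ is solvable it is amenable, so by Hulanicki's theorem $C^*_r(G)=C^*(G)$, i.e.\ every unitary representation of $G$ factors through $C^*_r(G)$, whence $\spec(\rho(M))\subseteq\spec(G)$ for every such $\rho$. Applying this to the trivial representation ($\rho(M)=\tfrac14\sum_{s\in S}1=1$) and to $\pi_\chi$, where $\chi\colon G\twoheadrightarrow\mathbb{Z}/2$ has $\chi(x),\chi(y)$ non-trivial ($\pi_\chi(M)=\tfrac14\sum_{s\in S}\chi(s)=-1$), we get $\{-1,1\}\subseteq\spec(G)$; and $\spec(G)\subseteq[-1,1]$ always. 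It remains to see that $\spec(G)$ is connected. Here $G$ is torsion-free and, being amenable, has the Haagerup property, so by Higson--Kasparov \cite{Higson-Kasparov-OperatorKTheory-97} the Baum--Connes conjecture holds for $G$; for a torsion-free group this gives the Kadison--Kaplansky conjecture, so $C^*_r(G)$ contains no idempotent other than $0$ and $1$. Were $\spec(M)$ a disjoint union of non-empty compacts $A\sqcup B$, the function $\mathbf 1_A$ would be continuous on $\spec(M)$ and continuous functional calculus inside $C^*_r(G)$ would produce the non-trivial idempotent $\mathbf 1_A(M)$ --- a contradiction. Hence $\spec(G)$ is a connected compact subset of $[-1,1]$ containing $\pm1$, so $\spec(G)=[-1,1]$.

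The hard part is the construction, not the spectrum. Torsion-freeness and the bound on the derived length are handled by a judicious choice of module; the genuinely delicate requirement is that a \emph{continuum} of the submodule choices yield pairwise non-isomorphic groups, which forces one to work over a non-Noetherian group ring and to use Hall's finiteness theory to pin down which part of the module datum is an isomorphism invariant of the abstract group. The $\mathbb{Z}/2$-quotient condition (responsible for $-1\in\spec(G)$) is minor; in fact, if one prefers the groups $G_\alpha$ to have infinite abelianization, the Higson--Kasparov step can be dropped altogether, since then a Nielsen transformation produces generators both mapping to $1\in\mathbb{Z}$ and the Hulanicki-type inclusion for the covering $\Gamma(G,S)\to\Gamma(\mathbb{Z},\{\pm1\})$ already gives $[-1,1]\subseteq\spec(G)$.
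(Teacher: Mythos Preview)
Your spectral computation is exactly the paper's: it quotes Proposition~3.7 of Bartholdi--Grigorchuk, whose proof is precisely the Higson--Kasparov $\Rightarrow$ Kadison--Kaplansky $\Rightarrow$ no non-trivial idempotents $\Rightarrow$ connected spectrum argument you spell out, combined with $\pm1\in\spec(G)$ via the trivial character and the $\mathbb Z/2$-character. So on that half there is nothing to compare.

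The construction half is where you diverge. The paper does not work abstractly with a module $G_0''$ over $\mathbb Z[Q]$; it writes down Hall's example explicitly. Take $A$ the free class-$2$ nilpotent group on generators $x_i$, $i\in\mathbb Z$, form $B=A\rtimes_\varphi\mathbb Z$ with $\varphi$ the shift (so $B=\langle x_0,\varphi\rangle$), kill the relations $[x_i,x_j]=[x_k,x_l]$ whenever $i-j=k-l$ to get $K$, and observe that $Z(K)$ is free abelian of infinite rank. Every subgroup $H<Z(K)$ is normal, $K/H$ is visibly torsion-free and step-$3$ solvable, and all relations are products of commutators, hence of even length --- giving the $\mathbb Z/2$-quotient for free. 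The non-isomorphism step is then a one-liner: a finitely generated group has only countably many automorphisms, hence only countably many quotients isomorphic to a given one, so among continuum many distinct $H$ one gets continuum many isomorphism classes. Your proposal to distinguish the $G_\alpha$ via annihilators or associated primes of the module $G_\alpha''$ is workable in principle but far heavier than needed; the cardinality argument sidesteps it entirely. Likewise your appeal to Hall's max-$n$ and residual-finiteness theorems to force torsion-freeness is both vague and aimed at the wrong target (those results concern metabelian groups, not the step-$3$ groups $G_\alpha$); in the paper's concrete model torsion-freeness is immediate because one is quotienting a torsion-free group by a subgroup of its torsion-free centre. Your closing remark that one can bypass Higson--Kasparov when the abelianisation is infinite is correct and nice, though the paper does not take that route.
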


In the proof of our results we use the Hulanicki Theorem from \cite{Hulanicki-WeakContainment-64}-\cite{Hulanicki-Means-66} (referred sometimes as Hulanicki-Reiter Theorem \cite{BekkaHarpeValette-Kazdan-08})
\begin{thm}[Hulanicki]\label{ThHulanicki} A locally compact group $G$ is amenable if and only if any unitary representation of $G$ is weakly contained in the regular representation of $G$.
\end{thm}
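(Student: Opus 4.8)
The plan is to establish the two implications separately, relying on the standard reformulation of amenability through the existence of a left-invariant mean, and on the reformulation of weak containment $\pi\prec\lambda$ via positive-definite functions (equivalently, via the operator-norm inequality $\|\pi(f)\|\leqslant\|\lambda(f)\|$ for every $f\in C_c(G)$, where $\lambda$ is the left regular representation). The key observation is that the nontrivial content is concentrated in the single equivalence ``$G$ amenable $\iff 1_G\prec\lambda$'' (with $1_G$ the trivial representation), while the passage from this to arbitrary unitary representations is purely formal via Fell's absorption principle.

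\emph{Amenability implies weak containment.} Assume $G$ is amenable and let $(\pi,\mathcal H_\pi)$ be any unitary representation. First I would record \textbf{Fell's absorption principle}: the map $W(\xi\otimes f)(g)=\pi(g)^{-1}\xi\otimes f(g)$ on $\mathcal H_\pi\otimes L^2(G)$ is a unitary intertwining $\pi\otimes\lambda$ with $1_{\mathcal H_\pi}\otimes\lambda$, so $\pi\otimes\lambda\cong(\dim\mathcal H_\pi)\cdot\lambda\prec\lambda$ for \emph{every} locally compact group. Next, amenability provides almost invariant vectors for $\lambda$: approximately invariant compact Følner sets $F$ of finite positive Haar measure $|F|$ yield unit vectors $\xi_F=|F|^{-1/2}\mathbf 1_F\in L^2(G)$ with $\|\lambda(g)\xi_F-\xi_F\|_2$ small uniformly on compacta, i.e.\ $1_G\prec\lambda$. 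Since tensoring by a fixed representation preserves weak containment and weak containment is transitive, $\pi\cong\pi\otimes 1_G\prec\pi\otimes\lambda\prec\lambda$, which is the assertion.

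\emph{Weak containment implies amenability.} Conversely, assume every unitary representation of $G$ is weakly contained in $\lambda$, and apply this to $1_G$, so $1_G\prec\lambda$. Unwinding the definition, for each compact $K\subseteq G$ and $\varepsilon>0$ there is a unit vector $\xi\in L^2(G)$ with $\sup_{g\in K}\|\lambda(g)\xi-\xi\|_2<\varepsilon$. Passing to $f=|\xi|^2\in L^1(G)$, which is nonnegative of norm one, the Cauchy--Schwarz estimate $\bigl\||\lambda(g)\xi|^2-|\xi|^2\bigr\|_1\leqslant\|\lambda(g)\xi-\xi\|_2\,\|\lambda(g)\xi+\xi\|_2\leqslant 2\|\lambda(g)\xi-\xi\|_2$ shows $f$ is almost left-invariant in $L^1$, uniformly on $K$ (Reiter's condition). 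Viewing each such $f$ as the state $\varphi\mapsto\int_G\varphi(x)f(x)\,\dd x$ on $L^\infty(G)$ and taking a weak-$*$ cluster point $m$ of this net, indexed by $(K,\varepsilon)$, in the weak-$*$ compact unit ball of $L^\infty(G)^*$, the asymptotic left-invariance of the $f$'s forces $m$ to be a left-invariant mean; hence $G$ is amenable.

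\emph{Main obstacle.} The genuinely delicate points all live in the first implication and are matters of care in the non-discrete setting: setting up weak containment so that ``$\pi\otimes(-)$ preserves $\prec$'' and transitivity hold verbatim (cleanest through the inequality $\|\pi(f)\|\leqslant\|\rho(f)\|$ for $f\in C_c(G)$ when $\pi\prec\rho$), and verifying Fell's absorption with the requisite measurability and strong continuity on a general locally compact group. The equivalence ``$G$ amenable $\iff\lambda$ has almost invariant vectors'', invoked on both sides, is itself the heart of the Hulanicki--Reiter circle; a fully self-contained account would prove it via the Følner-set construction of almost invariant vectors sketched in the second paragraph together with the Reiter-condition argument producing an invariant mean sketched in the third.
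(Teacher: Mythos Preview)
The paper does not prove Theorem~\ref{ThHulanicki}; it is quoted as a classical background result with references to Hulanicki's original papers \cite{Hulanicki-WeakContainment-64}--\cite{Hulanicki-Means-66} (and to \cite{BekkaHarpeValette-Kazdan-08} for the name ``Hulanicki--Reiter''). There is therefore no in-paper proof to compare your attempt against.

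That said, your sketch is the standard modern route and is essentially correct. The decomposition into ``$1_G\prec\lambda$'' plus Fell absorption for the forward direction, and ``$1_G\prec\lambda\Rightarrow$ Reiter $\Rightarrow$ invariant mean'' for the converse, is exactly how this is done in, e.g., \cite{BekkaHarpeValette-Kazdan-08}. Two minor remarks: in the forward direction you should be a bit more careful about the existence of F{\o}lner sets of finite Haar measure in a general locally compact group (the safest phrasing is to invoke Reiter's property $(P_1)$ directly to produce an approximately invariant net in $L^1(G)_+$ of norm one, and then take square roots to land in $L^2$); and in the Cauchy--Schwarz step your factorisation should read $|\lambda(g)\xi|^2-|\xi|^2=(\lambda(g)\xi-\xi)\overline{\lambda(g)\xi}+\xi(\overline{\lambda(g)\xi}-\overline{\xi})$ before estimating, since $|a|^2-|b|^2$ is not literally a product of $a-b$ and $a+b$ for complex $a,b$. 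Neither point affects the overall correctness of the argument.
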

\noindent In particular, Theorem \ref{ThHulanicki} implies that given a subgroup $H<G$ of a countable amenable group $G$  the quasi-regular representation $\lambda_{G/H}$ of $G$  is weakly contained in the regular representation $\lambda_G$. It is known that weak containment $\rho\prec\eta$ of two unitary representations of a group $G$ is equivalent to the inclusion of the spectra $\sigma(\rho(m))\subset\sigma(\eta(m))$ for any $m\in\mathbb C[G]$ (see \cite{BekkaHarpeValette-Kazdan-08} or \cite{Dixmier-Algebras-69}). Notice that given a generating set $S$ for $G$ the Cayley graph $\Gamma(G,S)$ covers the Schreier graph $\Gamma(G,H,S)$ of the action of $G$ on $G/H$ and from Hulanicki Theorem \ref{ThHulanicki} it follows that for an amenable group $G$ the spectrum of $\Gamma(G,S)$ includes the spectrum of $\Gamma(G,H,S)$.

The above observation motivates us to consider a more general situation. Namely, given a graph $\Gamma$ of uniformly bounded degree together with a collection of weights $\alpha$ on the edges of $\Gamma$ in Section \ref{SecGraphs} we introduce a Laplace type operator on $l^2(\Gamma)$ associated to $\alpha$. If a graph $\widetilde \Gamma$ covers $\Gamma$ then the collection of weights $\alpha$ can be lifted to $\widetilde\Gamma$ and one can consider the corresponding Laplace type operator $\widetilde H$ on $l^2(\widetilde\Gamma)$. A natural question inspired by Theorem \ref{ThHulanicki} is: assuming amenability of $\widetilde\Gamma$ is it always true that $\sigma(H)\subset\sigma(\widetilde H)$? We don't know the answer in the full generality. But we prove that the answer is "Yes" under some restrictions:
\begin{thm}[Weak Hulanicki Theorem for graphs]\label{ThGraphCov}  Let $\Gamma_1$ be a uniformly bounded connected weighted graph which covers a weighted graph $\Gamma_2$ such that either
\setlist{nolistsep}
\begin{itemize}\item[$a)$] $\Gamma_1$ is amenable and $\Gamma_2$ is finite or
\item[$b)$] $\Gamma_1$ has subexponential growth.
 \end{itemize} Let $H_1,H_2$ be the Laplace type operators associated to $\Gamma_1,\Gamma_2$. Then $\sigma(H_2)\subset \sigma(H_1)$.
\end{thm}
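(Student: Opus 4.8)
The plan is to reduce both cases to the construction of approximate eigenvectors for $H_1$ from arbitrary (generalized) eigenvectors of $H_2$, following the classical Følner-type argument underlying Hulanicki's theorem but carried out directly on the level of graphs and their $l^2$-spaces. Fix $\lambda\in\sigma(H_2)$; since $H_2$ is a bounded self-adjoint (or at least bounded) operator on $l^2(\Gamma_2)$, for every $\varepsilon>0$ there is a unit vector $\xi\in l^2(\Gamma_2)$ with $\|H_2\xi-\lambda\xi\|<\varepsilon$. Let $p:\Gamma_1\to\Gamma_2$ be the covering map. The key point is that $p$ induces, for each vertex $v$ of $\Gamma_2$, a bijection between a fundamental-domain-like family of vertices; more usefully, weights lift so that the lifted operator $H_1$ is "locally isomorphic" to $H_2$ in balls. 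I would use this to transplant $\xi$ into $l^2(\Gamma_1)$ and then average over a large Følner set to kill the boundary error.

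First I would set up notation: since $\Gamma_1$ has uniformly bounded degree (bounded by some $D$) and the weights are uniformly bounded, $H_1$ and $H_2$ are bounded operators with norm controlled by $D$ and the weight bound, and $H_1$ commutes with the deck-transformation considerations only weakly, so instead I work with the "lift-and-cutoff" vector. In case $b)$, subexponential growth gives, by the Coulhon–Saloff-Coste / classical argument, a sequence of Følner sets $F_n\subset\Gamma_1$ with $|\partial F_n|/|F_n|\to 0$; in case $a)$, amenability of $\Gamma_1$ directly supplies such $F_n$, and finiteness of $\Gamma_2$ will be used to ensure the transplanted vector has comparable mass in every fiber. The second step is the transplantation: choose a base vertex, and for a Følner set $F_n$ define $\eta_n\in l^2(\Gamma_1)$ by $\eta_n(\tilde v)=\xi(p(\tilde v))\cdot \mathbf{1}_{F_n}(\tilde v)$, suitably normalized. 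Because $p$ is a covering, at every vertex $\tilde v$ whose entire neighborhood lies inside $F_n$ one has $(H_1\eta_n)(\tilde v)=(H_2\xi)(p(\tilde v))$, so $H_1\eta_n-\lambda\eta_n$ agrees with the lift of $H_2\xi-\lambda\xi$ except on vertices within distance $1$ of $\partial F_n$. The third step is the estimate: $\|H_1\eta_n-\lambda\eta_n\|^2$ splits into an interior term, bounded by (number of fibers meeting $F_n$) times $\|H_2\xi-\lambda\xi\|^2 < C\varepsilon^2$ after normalization, plus a boundary term bounded by $\mathrm{const}\cdot|\partial F_n|\cdot\|\xi\|_\infty^2$; dividing by $\|\eta_n\|^2\sim |F_n|\cdot(\text{min fiber mass of }\xi)$ and using $|\partial F_n|/|F_n|\to 0$ makes the boundary term vanish as $n\to\infty$. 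Here finiteness of $\Gamma_2$ in case $a)$ guarantees $\|\xi\|_\infty$ and the per-fiber mass of $\xi$ are controlled (one may even take $\xi$ supported appropriately), and in case $b)$ subexponential growth is what yields the Følner sequence in the first place.

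The main obstacle I anticipate is controlling the normalization, i.e. showing $\|\eta_n\|$ is genuinely of order $\sqrt{|F_n|}$ rather than degenerating — the transplanted vector could in principle concentrate its mass on fibers that $F_n$ barely touches. In case $a)$ this is handled because $\Gamma_2$ is finite, so one can first replace $\xi$ by a vector whose mass is spread over all of $\Gamma_2$ (or simply use that the number of fibers is finite and each appears roughly $|F_n|/|V(\Gamma_2)|$ times in a good Følner set, which itself requires a short argument that large Følner sets in a cover are "equidistributed" over the base — provable by a further averaging or by enlarging $F_n$ to be a union of full fibers over a Følner set of $\Gamma_2$, which is a point). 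In case $b)$, subexponential growth must be leveraged not just for existence of $F_n$ but to guarantee the interior count (number of fibers meeting $F_n$) does not overwhelm $|F_n|$; since each fiber contributes at least one vertex to $F_n$, the interior term is automatically at most $|F_n|\cdot\|H_2\xi-\lambda\xi\|^2$, which after normalization by $\|\eta_n\|^2\ge (\text{something})\cdot|F_n|$ is $O(\varepsilon^2)$ — so in fact this case is cleaner once the Følner sequence is in hand. I would therefore expect to spend most of the write-up making the equidistribution/normalization precise in case $a)$, and the rest is the routine boundary estimate above, concluding $\lambda\in\sigma(H_1)$ and hence $\sigma(H_2)\subset\sigma(H_1)$.
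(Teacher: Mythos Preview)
Your overall strategy---lift an (approximate) eigenvector through the covering, multiply by the indicator of a F{\o}lner set, and show the boundary error is negligible---is exactly what the paper does, and your treatment of case~$a)$ is essentially correct. The ``equidistribution'' step you flag is handled in the paper by path-lifting: fixing a vertex $w\in\Gamma_2$ with $f(w)\neq 0$ and, for each $y\in\Gamma_2$, a path of length at most $N=|V(\Gamma_2)|$ from $y$ to $w$, one lifts these paths to obtain an injection $F_k\hookrightarrow \phi^{-1}(w)\cap B_N(F_k)$ fiberwise, giving $|F_k|\le N\cdot|\phi^{-1}(w)\cap B_N(F_k)|$ and hence $\|f_k\|^2\ge \tfrac{1}{N}|f(w)|^2\,|F_k|$. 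You should also not skip the preliminary reduction to the self-adjoint (even positive) case via Lemma~\ref{LmEquivNorm}: for a general non-normal $H_\alpha$, points of the spectrum need not be approached by approximate eigenvectors, so your ``(or at least bounded)'' hides a genuine step.

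The real gap is in case~$b)$, which you dismiss as ``cleaner'' but is in fact the harder one. Your claim $\|\eta_n\|^2\ge c\,|F_n|$ is false in general: when $\Gamma_2$ is infinite and $\xi$ has finite support $S$, the proportion of $F_n$ projecting into $S$ can tend to zero (take $F_n=B_n(v)$; then $p(F_n)\subset B_n(p(v))$, whose cardinality may grow without bound in $\Gamma_2$). Normalizing by $|F_n|$ therefore does not work. The paper's remedy is to take $F_k=B_k(v)$ specifically, introduce $\alpha_k=|\phi^{-1}(\phi(v))\cap B_k(v)|$, and compare \emph{everything} to $\alpha_k$ rather than to $|B_k(v)|$: path-lifting (with $N$ chosen so that $S\subset B_N(\phi(v))$) gives $|\phi^{-1}(y)\cap B_{k-N}(v)|\le\alpha_k$ and $|\phi^{-1}(y)\cap (B_k\setminus B_{k-N})(v)|\le\alpha_{k+N}-\alpha_{k-2N}$ for each $y\in S$, so the interior term is $\le\alpha_k\varepsilon^2$, the boundary term is $O(\alpha_{k+N}-\alpha_{k-2N})$, and $\|f_k\|^2\ge\alpha_{k-N}$. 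Subexponential growth of $\Gamma_1$ is then used not merely to supply F{\o}lner sets but to force $\liminf_k \alpha_{k+1}/\alpha_k=1$ (otherwise $\alpha_k$, and hence $|B_k(v)|$, would grow exponentially), which makes $(\alpha_{k+N}-\alpha_{k-2N})/\alpha_{k-N}$ small along a subsequence. This is the missing idea in your sketch.
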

\noindent This result is used in the proof of Theorem \ref{PropGrigSpec}.

%%%%%%%%%%%%%%%%%%%%%%%%%%%%%%%%%%%%%%%%%%%%%%%%%%%%%%%%%%%%
\section{Groups $G_\omega$ and their Schreier graphs}\label{SecGomega}
Let us recall the construction of the family of groups $G_\omega$ introduced by the second author in \cite{Grigorchuk-DegreesGrowth-84}. Notice that, originally, the groups $G_\omega$ were introduced as groups acting on the interval $[0,1]$. We will view them as groups of automorphisms of the binary rooted tree. Recall that the binary rooted tree has the vertex set $V$ identified with the set of all finite words over the two-letter alphabet $\{0,1\}$. Two vertices of $V$ are connected by an edge if and only if one of them is obtained by concatenation of a letter at the end of the other: $v=w0$ or $v=w1$. The $n$th level $V_n$ of $V$ consists of all $n$-letter words: $V_n=\{0,1\}^n$. Thus, each vertex from $V_n$ is connected to one vertex of $V_{n-1}$ for $n\geqslant 1$ and two vertices of $V_{n+1}$ for $n\geqslant 0$.  The set of vertices $V_n$ is equipped with the natural lexicographic order.

As before, we denote by $\Omega=\{0,1,2\}^\mathbb N$ the space of all infinite sequences of elements from $\{0,1,2\}$. We equip $\Omega$ with the Tychonoff topology. Using the correspondence $i\to \overline i,\;i\in\{0,1,2\}$, given by
\begin{equation}\label{EqCorrespondence}
\overline{0}=\begin{bmatrix}\Pi\\ \Pi\\ \mi \end{bmatrix},\;\;\overline{1}=\begin{bmatrix}\Pi\\ \mi \\ \Pi\end{bmatrix},\;\;\overline{2}=\begin{bmatrix}\mi \\ \Pi\\ \Pi\end{bmatrix},
\end{equation} assign to every $\omega\in \Omega$ the sequence of
columns $\overline\omega=\overline\omega_1\cdots\overline\omega_n\cdots$
which we view as a $3\times\infty$ matrix with the entries in the alphabet
$\{\Pi,\mi\}$.
Denote by $\u_\omega,\v_\omega$ and $\w_\omega$ the increasing sequence of indexes $n$ for which the first, the second and the third row of $\overline\omega_n$ correspondingly is equal to $\Pi$.
For $n\in\mathbb N$, let $v_n=1^n$ be the vertex of the $n$th level of $T$ the largest in the lexicographic order. For $n\in\mathbb N$, denote by $w_n$ the vertex from $n$th level of $T$ attached to $v_{n-1}$ and not equal to $v_n$, \ie $w_n=1^{n-1}0$. Denote by $w_0=v_0$ the vertex of $T$ corresponding to the empty word (called \text{the root} of $T$). Let $\sigma_n$ be the transposition of the two branches of $T$ adjacent to $w_n$. Thus, $$\sigma_n(w_n0v)=w_n1v,\;\;\sigma_n(w_n1v)=w_n0v,\;\;\sigma_n(u)=u\;\;\text{for all other vertices},$$ where $v$ is any finite word over $\{0,1\}$. We define the automorphisms $a,b_\omega,c_\omega,g_\omega$ of $T$ by:
\begin{equation}\label{EqGenerators}
a=\sigma_0,\;\;b_\omega=\sigma_{\u_1}\cdots\sigma_{\u_n}\cdots,\;\;
c_\omega=\sigma_{\v_1}\cdots\sigma_{\v_n}\cdots,\;\;
d_\omega=\sigma_{\w_1}\cdots\sigma_{\w_n}\cdots.
\end{equation} The group $G_\omega$ is the group of automorphisms of $T$ generated by $a,b_\omega,c_\omega,d_\omega$.

Recall that $\mathcal G=G_{\omega_0}$, where $\omega_0$ is the periodic sequence $012012\ldots$. In \cite{Lysionok-Grigorchuk-85} Lysionok showed that it has the following presentation:
\begin{equation}\label{EqGrigPres}\mathcal G=\langle a,b,c,d:a^2,b^2,c^2,d^2,bcd,\sigma^k(ad)^4,
\sigma^k(adacac)^4 \;(k\geqslant 0)\rangle, \end{equation} where $\sigma$ is the substitution defined by $\sigma(a)=aca,\sigma(b)=d,\sigma(c)=b,\sigma(d)=c$. Thus, the canonical generators $a,b,c,d$ are involutions (elements of order 2). Moreover, from the relation $bcd=1$ one can deduce that $b,c,d$ commute and $bc=d,bd=c,cd=b$. Therefore, $\mathcal G$, in fact, is generated by three elements: $a$ and any two from $b,c,d$.

More generally, let us recall similar facts about $G_\omega$. We refer the reader to \cite{Grigorchuk-DegreesGrowth-84} for details.
Denote the identity transformation of $T$ by $1$. The elements $b_\omega,c_\omega,d_\omega$ are pairwise commuting. One has:
\begin{equation}\label{EqGomegaProperties}a^2=b_\omega^2=c_\omega^2=d_\omega^2=1,\;\;b_\omega c_\omega d_\omega=1.
\end{equation}
We call the above relations the \emph{standard relations}.

Given a finitely generated group $G$ with a finite generating set $S$ let $\Gamma=\Gamma(G,S)$ be its Cayley graph and $B_n=B_n(G,S)$ be the ball of radius $n$ around the identity element of $G$ in $\Gamma$. The growth function of $G$ is defined as \begin{equation}\gamma(n)=\gamma_{G,S}(n)=|B_n|,n\in\mathbb N.\end{equation} The set of growth functions of groups is equipped with the (non-total) order $\preccurlyeq$. One says that $\gamma_1\preccurlyeq\gamma_2$ if there exists a number $C\in\mathbb N$ such that $\gamma_1(n)\leqslant \gamma_2(Cn)$ for all $n\in\mathbb N$. If $\gamma_1\preccurlyeq\gamma_2$ and $\gamma_2\preccurlyeq \gamma_1$ the growth functions $\gamma_1$ and $\gamma_2$ are called equivalent and we write $\gamma_1\thicksim\gamma_2$. If $\liminf\limits_{n\to\infty}\sqrt[n]{\gamma(n)}>1$, the group $G$ is said to have an \emph{exponential growth}. If $\gamma(n)\leqslant Cn^k$ for all $n\in\mathbb N$, where $C,k>0$, the group $G$ is said to have a \emph{polynomial growth}. Finally, the group $G$ is said to have an \emph{intermediate growth} if it has neither an exponential nor a polynomial growth.

Recall that $\Omega_2$ is the subset of $\Omega$ consisting of sequences with at lease two symbols from $\{0,1,2\}$ occurring infinitely many times. In \cite{Grigorchuk-DegreesGrowth-84}
the second author showed
\begin{thm}\label{ThGrowth} For $\omega\in \Omega_2$ the groups $G_\omega$ have intermediate growth. Moreover, there is a continuum $\widetilde\Omega\subset\Omega_2$ such that for $\omega\in\widetilde\Omega_2$ the groups $G_\omega$ have pairwise non-equivalent growth functions.
\end{thm}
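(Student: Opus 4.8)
The plan is to treat the two claims of the theorem by two largely independent arguments, both built on the self-similar structure of $G_\omega$ set up in Section~\ref{SecGomega}: first the qualitative statement that every $G_\omega$ with $\omega\in\Omega_2$ has growth strictly between polynomial and exponential, and then the construction of a continuum of $\omega$'s with pairwise non-equivalent growth. \emph{Upper bound (subexponential growth).} I would use that the first-level stabilizer $\st_{G_\omega}(V_1)$ has index $2$ in $G_\omega$ and that the section map $\psi\colon g\mapsto(g_0,g_1)$ embeds it into $G_{\sigma\omega}\times G_{\sigma\omega}$, $\sigma$ the shift on $\Omega$, the sections of the canonical generators being words of length $\leqslant 1$ over those of $G_{\sigma\omega}$ and one of $b_\omega,c_\omega,d_\omega$ having trivial section on one coordinate. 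Writing a geodesic word in alternating syllable form $a^{\varepsilon}x_1ax_2a\cdots a x_m a^{\varepsilon'}$ with $x_i\in\{b_\omega,c_\omega,d_\omega\}$ and using the standard relations to absorb adjacent contributions, one obtains a contraction estimate $|g_0|+|g_1|\leqslant\lambda_\omega|g|+C$ with $\lambda_\omega<1$, which iterates down to level $k$ as $\sum_{v\in V_k}|g_v|\leqslant\Lambda_k|g|+C_k$ with $\Lambda_k=\prod_{j<k}\lambda_{\sigma^j\omega}\to 0$ (here $\omega\in\Omega_2$ is used, to guarantee infinitely many positions with good contraction). An element of length $\leqslant n$ is then determined by its image in the finite group $Q_k$ through which $G_\omega$ acts on the first $k$ levels, together with its level-$k$ portrait, which is supported on at most $\Lambda_k n+C_k$ of the $2^k$ coordinates and of total section-length at most $\Lambda_k n+C_k$; choosing $k=k(n)$ so that $\Lambda_k n$ has order $n^{\theta}$ for a suitable $\theta=\theta(\omega)<1$ and bounding crudely the number of supports, length-distributions and fill-ins, one gets $\gamma_{G_\omega}(n)\leqslant e^{n^{\theta'}}$ for some $\theta'<1$. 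A subtle point already here: along long runs of a constant symbol in $\omega$ the contraction stalls, and one must observe that on such a run $G_{\sigma^k\omega}$ locally resembles the infinite dihedral group, so that the relevant finite quotients $Q_k$ have size $2^{O(k)}$ rather than $2^{O(2^k)}$, keeping the count subexponential.

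\emph{Lower bound (super-polynomial growth).} For $\omega\in\Omega_2$ I would exhibit the branch structure of $G_\omega$: a finite-index subgroup $K=K_\omega\leqslant\st_{G_\omega}(V_1)$ (automatically finitely generated) with $\psi(K)\supseteq K\times K$. Since $K$ is finitely generated, the generators $(s,1),(1,s)$ of $K\times K$ have $K$-length $\leqslant L$ for some constant $L$, hence $|\psi^{-1}(g,h)|_K\leqslant L(|g|_K+|h|_K)$ for $g,h\in K$, and injectivity of $\psi$ on $\st_{G_\omega}(V_1)$ gives $\gamma_K(2Ln)\geqslant\gamma_K(n)^2$. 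Writing $f=\log\gamma_K$ this reads $f(2Ln)\geqslant 2f(n)$, so $f(n)\succcurlyeq n^{\beta}$ with $\beta=\log 2/\log(2L)\in(0,1)$, whence $\gamma_{G_\omega}(n)\succcurlyeq e^{n^{\beta}}$ and the growth is super-polynomial. Combined with the upper bound this yields intermediate growth for every $\omega\in\Omega_2$. That the hypothesis $\omega\in\Omega_2$ is essential is visible from the degenerate cases: if a single symbol occurs cofinitely, $G_\omega$ is virtually infinite dihedral and has linear growth, so no such branch structure exists.

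\emph{The continuum.} Both exponents above respond to the combinatorics of $\omega$: long runs of a constant symbol, where the $\lambda_{\sigma^j\omega}$ are close to $1$, delay the contraction and push the growth upward (toward $e^{n}$), whereas stretches imitating $012012\cdots$ give strong contraction and an improved lower-bound exponent, hence slower growth. I would fix a very rapidly increasing sequence of scales $N_1\ll N_2\ll\cdots$ (say $N_{k+1}\geqslant N_k^{2}$), fix an almost-disjoint family $\{A_r\}_{r\in\mathbb{R}}$ of infinite subsets of $\N$, and for each $r$ build $\omega^{(r)}\in\Omega_2$ whose block governing lengths in $[N_k,N_{k+1}]$ is a long constant run if $k\in A_r$ and a block of the mixing pattern $012$ otherwise, with the scale ratios chosen (and, if boundary effects require it, consecutive blocks duplicated) so that the $k$-th block alone determines $\gamma_{\omega^{(r)}}$ on its range up to bounded factors. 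Then for $r\neq r'$, since $A_r\setminus A_{r'}$ is infinite, at infinitely many scales $N_{k+1}$ one has $\log\gamma_{\omega^{(r)}}(N_{k+1})\gg\log\gamma_{\omega^{(r')}}(CN_{k+1})$ for every fixed $C$ (using $CN_{k+1}\ll N_{k+2}$), so $\gamma_{\omega^{(r)}}\not\preccurlyeq\gamma_{\omega^{(r')}}$; hence the $\gamma_{\omega^{(r)}}$ are pairwise non-equivalent (indeed pairwise $\preccurlyeq$-incomparable), and $\widetilde\Omega=\{\omega^{(r)}:r\in\mathbb{R}\}$ is the required continuum.

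\emph{Main obstacle.} The upper- and lower-bound arguments are, by now, fairly standard self-similarity and branch-group computations; the real difficulty is the continuum statement. One needs the two-sided estimates to be \emph{uniform and quantitative in the relevant finite portion of $\omega$} — i.e.\ to prove that a prescribed block inserted at a prescribed scale changes $\gamma_\omega$ on its range by a controlled, block-type-dependent amount exceeding what a linear change of variable $n\mapsto Cn$ can compensate. This forces one to track all constants ($\lambda_\omega$, $C$, $C_k$, the index of $K$, and $L$) through a recursion in which $\omega$ is shifted at every level, to handle the finite quotients along constant runs, and then to choose the scale ratios $N_{k+1}/N_k$ large enough to swamp every accumulated error term and decouple the blocks; the bookkeeping of this decoupling is what makes Theorem~\ref{ThGrowth} delicate.
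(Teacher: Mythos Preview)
The paper does not prove Theorem~\ref{ThGrowth} at all: it is stated there as a result of \cite{Grigorchuk-DegreesGrowth-84} (``In \cite{Grigorchuk-DegreesGrowth-84} the second author showed\ldots''), with no argument given. So there is nothing in the paper to compare your proposal against; what you have written is an attempted reconstruction of the proof in the cited 1984 paper, and as such your outline is in the right spirit --- contraction of sections for the upper bound, a branching mechanism for the lower bound, and a block construction along a rapidly growing sequence of scales to separate growth types.

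One genuine gap is worth flagging in your lower-bound step. You write ``$\psi(K)\supseteq K\times K$'' for a finite-index subgroup $K=K_\omega\leqslant G_\omega$, but the section map $\psi$ sends $\st_{G_\omega}(V_1)$ into $G_{\sigma\omega}\times G_{\sigma\omega}$, not into $G_\omega\times G_\omega$. For the periodic sequence $\omega_0=(012)^\infty$ this is harmless because $G_{\sigma\omega_0}\cong G_{\omega_0}$ via a permutation of generators, but for a generic $\omega\in\Omega_2$ the groups $G_\omega$ and $G_{\sigma\omega}$ are genuinely different, so an inclusion of the form $K_\omega\times K_\omega\subset\psi(K_\omega)$ makes no sense as stated. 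What one actually needs is a family $K_{\sigma^j\omega}\leqslant G_{\sigma^j\omega}$ with $\psi_j(K_{\sigma^j\omega})\supseteq K_{\sigma^{j+1}\omega}\times K_{\sigma^{j+1}\omega}$ and uniform control of the embedding constants $L_j$ along the orbit; this is exactly the kind of bookkeeping you acknowledge at the end, but it already enters at the qualitative lower-bound stage, not only in the continuum construction. In \cite{Grigorchuk-DegreesGrowth-84} this is handled by working simultaneously with the whole family $\{G_{\sigma^j\omega}\}_j$ and exploiting that $\omega\in\Omega_2$ guarantees infinitely many positions where all three symbols have appeared, at which the needed elements can be manufactured with bounded word length.
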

\begin{figure}\centering\epsfig{file=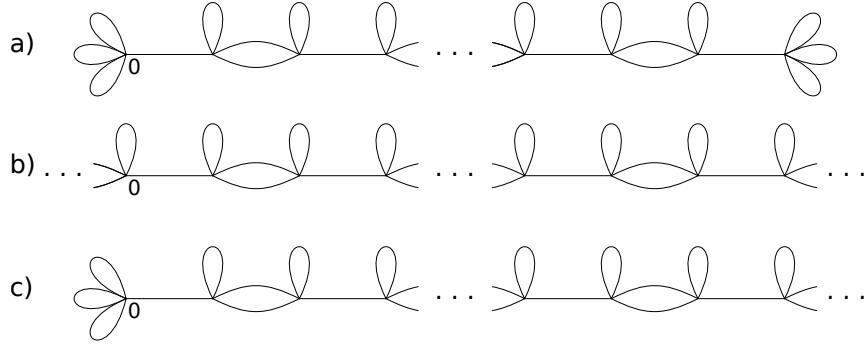,width=0.95\linewidth}
\caption{The graphs $a)$ $\Upsilon_n$, $b)$ $\Upsilon_\infty$ and $c)$ $\Upsilon_\infty^0$.} \end{figure}
Let us briefly recall the notion of the Schreier graph of an action of a group on a set. If a group $G$ with a generating set $S$ acts transitively on a set $U$, the Schreier graph $\Gamma$ of this action has the vertex set isomorphic to $U$. Two vertices $u_1,u_2\in U$ are connected by a directed edge labeled by a generator $s\in S$ if and only if $u_2=su_1$. Sometimes it is convenient to forget about the labels on the edges and the directions of the edges and consider unlabeled non-directed Schreier graphs.

Notice that automorphisms of the binary rooted tree $T$ preserve the levels $V_n$, $n\in\mathbb Z_+$, of the vertex set $V$. By definition, the boundary of $T$ is the set $\partial T$ of infinite paths of $T$ which start at $V_0$ and visit each of the levels $V_n$, $n\in\mathbb N$, exactly once. As a set, $\partial T$ is canonically isomorphic to $\{0,1\}^\mathbb N$. Automorphisms of $T$ act on $\partial T$ by permuting the infinite paths. The Schreier graphs for the actions of the group $\mathcal G$ on the sets $V_n$ and on $\partial T$ were described in  \cite{BartholdiGrigorchuk-Spectrum-00}.

For $n\in \mathbb N$ denote by $\Gamma_n$ the (unlabeled non-directed) Schreier graph of the action of $G_\omega$ on $V_n$ with respect to the generating set $S_\omega=\{a,b_\omega,c_\omega,d_\omega\}$. For a point $x\in\partial T$ denote by $\Gamma_x$ the (unlabeled non-directed) Schreier graph of the action of $G_\omega$ on the orbit of $x$ with respect to $S_\omega$. Introduce the graphs $\Upsilon_n$ with $2^n$ vertices identified with $\{0,1,\ldots,2^n-1\}$ such that
\begin{itemize}
\item[$1.$] $0$ and $2^n-1$ each has three loops attached;
\item[$2.$] each other vertex has one loop attached;
\item[$3.$] each even $i$ is connected by one edge to $i+1$;
\item[$4.$] each odd $i$ (except $2^{n-1}-1$) is connected by two edges to $i+1$.
\end{itemize}
Let $\Upsilon_\infty$ be the graph with the vertex set identified with $\mathbb Z$ and the edge set satisfying
$$ 2'. \;\;\text{each vertex has one loop attached}$$ and the conditions $3$ and $4.$ Finally, denote by $\Upsilon_\infty^0$ the graph with the vertex set identified with $\mathbb N\cup\{0\}$ and the edge set satisfying
$$1'.\;\;0\;\;\text{has three loops attached}$$ and the conditions $2-4.$ Denote by $\mathrm{r}=1^\infty$ the right most point of $\partial T$.
\begin{lem}\label{LmSchreierGamman} Let  $\omega\in\Omega$. For any $n\in\mathbb N$ the Schreier graph without labels $\Gamma_n$ is isomorphic to $\Upsilon_n$. For any $x\in G_\omega\mathrm{r}\subset \partial T$ the Schreier graph without labels $\Gamma_x$ is isomorphic to  $\Upsilon_\infty^0$. For any $x\in\partial T\setminus G_\omega\mathrm{r}$ the Schreier graph $\Gamma_x$ is isomorphic to  $\Upsilon_\infty$.
\end{lem}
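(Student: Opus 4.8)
The plan is to use the recursive (wreath) structure relating $G_\omega$ to $G_{\sigma\omega}$, where $\sigma\omega=\omega_2\omega_3\cdots$, and to argue by induction on the level $n$. First I would describe how the generators act on $V_n=0V_{n-1}\sqcup 1V_{n-1}$. The automorphism $a=\sigma_0$ interchanges the two subtrees hanging from the root, so it realises the bijection $0v\leftrightarrow 1v$ of $V_n$. Each of $b_\omega,c_\omega,d_\omega$ fixes the first letter; since $w_n=1^{n-1}0$ lies in the right subtree $T_1$ for $n\ge 2$ and equals the root of $T_0$ for $n=1$, the products in \eqref{EqGenerators} show that, under the obvious identification $T_1\cong T$, the generators $b_\omega,c_\omega,d_\omega$ act on $T_1$ as $b_{\sigma\omega},c_{\sigma\omega},d_{\sigma\omega}$, while on $T_0$ exactly two of them act as the swap of $T_{00}$ and $T_{01}$ and the remaining one acts trivially, the pattern being governed by $\omega_1$ through \eqref{EqCorrespondence}. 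Hence, on $0V_{n-1}$ the generators $b_\omega,c_\omega,d_\omega$ contribute a perfect matching by double edges (pairing $00u$ with $01u$) plus one loop at every vertex, and on $1V_{n-1}$ they realise exactly the Schreier graph of $G_{\sigma\omega}$ on $V_{n-1}$ with its $a$-labelled edges removed.

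For the first assertion I would induct on $n$. When $n=1$, $\Gamma_1$ consists of the vertices $0,1$ joined by the $a$-edge, each with three loops; this is $\Upsilon_1$. Assume $\Gamma_{n-1}\cong\Upsilon_{n-1}$ for $G_{\sigma\omega}$ -- the path on $2^{n-1}$ vertices whose edges alternate between single ($a$-) edges and double edges, with three loops at the extreme vertices $1^{n-2}0$, $1^{n-1}$ and one loop elsewhere. Deleting the single edges leaves the double edges, which form a perfect matching of the $2^{n-1}-2$ interior vertices, together with the two extreme vertices now isolated but still carrying three loops; this is the right half $1V_{n-1}$ of $\Gamma_n$. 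Regluing the left half (the ``flip the second letter'' double matching, with a loop at each vertex) and the $a$-matching between the halves, one traces the graph: from $1^n$ the only non-loop edge is an $a$-edge into $0V_{n-1}$; from there the only new edge is a left-half double edge; then an $a$-edge back into $1V_{n-1}$; then a surviving double edge of $\Upsilon_{n-1}$; and so on, alternately. One checks this enumerates all $2^n$ vertices exactly once, terminates at $1^{n-1}0$, and produces a path with single ($a$-) and double edges alternating, three loops at the ends $1^n$, $1^{n-1}0$, and one loop at each interior vertex -- i.e. $\Upsilon_n$. (Connectedness of $\Gamma_n$, equivalently level-transitivity of $G_\omega$, is subsumed by the trace, or may be quoted from \cite{Grigorchuk-DegreesGrowth-84}.)

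For the second assertion, given $x=x_1x_2\cdots\in\partial T$ write $x|_n=x_1\cdots x_n\in V_n$ and use the standard fact that the pointed Schreier graphs $(\Gamma_n,x|_n)$ converge, in the local sense, to $(\Gamma_x,x)$: the ball of radius $R$ about $x$ in $\Gamma_x$ coincides with the ball of radius $R$ about $x|_n$ in $\Gamma_n$ once $n$ is large. Since each $\Gamma_n\cong\Upsilon_n$ is a path of length $2^n\to\infty$, the graph $\Gamma_x$ is a one-ended ray or a two-ended line (it cannot be finite), inheriting from the $\Upsilon_n$ the alternation of single and double edges and the one-loop-per-interior-vertex rule. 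Finally $\Gamma_x$ is a ray exactly when it has an extreme vertex, and an extreme vertex must carry three loops, i.e. be a point $y\in\partial T$ fixed simultaneously by $b_\omega,c_\omega,d_\omega$. If $y_1=0$ then two of these generators act on $y$ through the fixed-point-free letter-flip of $T_0$, which is impossible; hence $y_1=1$, and the same reasoning applied to $\sigma\omega$ and $y_2y_3\cdots$ forces, inductively, $y=1^\infty=\mr$. Therefore $\Gamma_x$ has an extreme vertex precisely when $\mr\in G_\omega x$, that is, precisely when $x\in G_\omega\mr$; this gives $\Gamma_x\cong\Upsilon_\infty^0$ for $x\in G_\omega\mr$ and $\Gamma_x\cong\Upsilon_\infty$ otherwise.

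The main obstacle is the inductive bookkeeping for $\Gamma_n$: one must verify that deleting the $a$-edges of $\Upsilon_{n-1}$ and regluing via the two ``flip'' matchings really yields alternating single/double edges with the two triple-loop vertices at the ends of the new path, and that this is insensitive to which two of $b_\omega,c_\omega,d_\omega$ act nontrivially on $T_0$, so that the conclusion holds for every $\omega\in\Omega$ simultaneously. The local-convergence input for $\Gamma_x$ is routine.
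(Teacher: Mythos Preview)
Your argument is correct, but it proceeds quite differently from the paper's. The paper gives a direct, non-inductive analysis: for fixed $n>1$ it observes that the two rightmost vertices $1^n$ and $1^{n-1}0$ of $V_n$ are fixed by all of $b_\omega,c_\omega,d_\omega$ (hence carry three loops), while every other vertex $v$ is moved by a unique $\sigma_k$ with $k<n$, so exactly one of $b_\omega,c_\omega,d_\omega$ fixes $v$ and the other two send $v$ to the same vertex $\neq v,av$. Together with level-transitivity this forces the path structure $\Upsilon_n$ in one stroke; the boundary case is then dispatched in a single sentence by passing to limits. Your route instead exploits the wreath recursion $b_\omega,c_\omega,d_\omega\mapsto(b_{\sigma\omega},c_{\sigma\omega},d_{\sigma\omega})$ on $T_1$ and $(a,a,1)$ (up to permutation) on $T_0$, and inductively rebuilds $\Gamma_n$ from $\Gamma_{n-1}$ for $G_{\sigma\omega}$; you also flesh out the limit argument for $\Gamma_x$, including the pleasant observation that a common fixed point of $b_\omega,c_\omega,d_\omega$ on $\partial T$ is forced to be $1^\infty$. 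What your approach buys is an explicit description of how $\Upsilon_n$ is assembled from $\Upsilon_{n-1}$ (each single edge becoming single--double--single) and a self-contained reason why the endpoints are $1^n$ and $1^{n-1}0$; what the paper's approach buys is brevity, since the local computation at a single vertex already pins down the degree/loop pattern without tracking the recursion. Either way, the one genuinely external input is the transitivity of $G_\omega$ on $V_n$, which both proofs invoke rather than reprove.
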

\begin{proof} The case $n=1$ is trivial. Let $n>1$. By construction, the two right most vertices $u_1,u_2$ of $V_n$ satisfy:
$$a u_i\neq u_i,\;\;b_\omega u_i=c_\omega u_i=d_\omega u_i=u_i,\;\;i=1,2.$$ For any other vertex $v\in V_n$ one has $a v\neq v$, exactly one of the elements $b_\omega ,c_\omega ,d_\omega $ leaves $v$ fixed and the two other send $v$ to a vertex distinct from both $v$ and $a_\omega v$. Taking into account that $G_\omega $ acts transitively on $V_n$ we conclude from the above that $\Gamma_n$ is isomorphic to $\Upsilon_n$. The statements about $\Gamma_x$ follow from considering the limit points of the sequence of graphs $\{\Gamma_n\}_{n\in\mathbb N}$.
\end{proof}
\noindent In particular, the Schreier graphs $\Gamma_n$ and $\Gamma_x$ for any $\omega$ coincide with the corresponding Schreier graphs for the action of $\mathcal G$. Recall that $\partial T\cong\{0,1\}^\mathbb N$. Equip $\partial T$ with the Bernoulli measure $\mu=\{\tfrac{1}{2},\tfrac{1}{2}\}^\mathbb N$. From \cite{BartholdiGrigorchuk-Spectrum-00}, Theorem 3.6 and Corollary 4.4, we obtain the following:
\begin{prop}\label{PropSpecGamma} For $\mu$-almost every $x\in\partial T$ one has:
$$\overline{\bigcup\limits_{n\in\mathbb N}\spec(\Gamma_n)}= [-\tfrac{1}{2},0]\cup [\tfrac{1}{2},1]=\spec(\Gamma_x).$$
\end{prop}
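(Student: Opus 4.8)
The plan is to handle the finite graphs $\Gamma_n\cong\Upsilon_n$ and the infinite orbital graph $\Gamma_x\cong\Upsilon_\infty$ by two different devices: an explicit renormalization that computes $\overline{\bigcup_n\spec(\Gamma_n)}$, and a representation-theoretic limit argument that identifies this set with $\spec(\Gamma_x)$ for $\mu$-almost every $x$. By Lemma~\ref{LmSchreierGamman} the operator behind $\spec(\Gamma_n)$ is $M_n=\pi_n(m)$, where $m=\tfrac14(a+b_\omega+c_\omega+d_\omega)\in\mathbb C[G_\omega]$ and $\pi_n$ is the permutation representation on $V_n$; since $\Upsilon_n$ is $4$-regular this is an honest Markov operator.

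For the finite graphs I would use the self-similarity of $G_\omega$, which realizes $\Upsilon_{n+1}$ as the union of two modified copies of $\Upsilon_n$ glued along a bounded-size seam near its midpoint. Because the seam is bounded, a Schur-complement (\emph{renormalization}) computation relates the characteristic determinant of $M_{n+1}$ to that of $M_n$ evaluated at a transformed spectral parameter. Since the one-variable characteristic polynomial does not close up, one first passes to a two-parameter pencil of operators that specializes at $x=1$ to $M_n-y\,\mathrm{Id}$, and proves by induction on $n$ that the determinant of the level-$(n+1)$ pencil equals an explicit factor $R_n(x,y)$ times the determinant of the level-$n$ pencil precomposed with a fixed polynomial self-map $F$ of the plane, quadratic in $y$. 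Specializing to $x=1$ then presents $\spec(\Gamma_n)$ as $F^{-(n-1)}\big(\spec(\Gamma_1)\big)$ together with finitely many \emph{exceptional} roots contributed by $R_1,\dots,R_{n-1}$.

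The heart of the matter --- and, I expect, the main obstacle --- is the analysis of the quadratic map $F$ on the relevant real invariant set: computing $F$ precisely, locating its fixed points and the gap it leaves, checking that it is expanding there, and verifying that the exceptional roots accumulate only on backward $F$-orbits. Doing so shows that $\bigcup_n\spec(\Gamma_n)$ is dense in, and contained in, $[-\tfrac12,0]\cup[\tfrac12,1]$, whence $\overline{\bigcup_n\spec(\Gamma_n)}=[-\tfrac12,0]\cup[\tfrac12,1]$. This is elementary one-dimensional real dynamics, but it is precisely where the two-interval shape is produced, so the bookkeeping must be done with care.

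It remains to identify $\spec(\Gamma_x)$ with this set for $\mu$-a.e.\ $x$. The measure $\mu$ is $G_\omega$-invariant (tree automorphisms permute the depth-$n$ cylinders, each of measure $2^{-n}$) and the action $G_\omega\curvearrowright(\partial T,\mu)$ is ergodic. Let $\kappa$ denote the Koopman representation on $L^2(\partial T,\mu)$ and $W_n\subset L^2(\partial T,\mu)$ the subspace of functions measurable with respect to the depth-$n$ cylinders: the $W_n$ are $G_\omega$-invariant (levels are preserved), $\kappa|_{W_n}$ is unitarily equivalent to $\pi_n$ via the normalized indicator basis, each $W_n$ reduces $\kappa(m)$, and $\bigcup_n W_n$ is dense, so a standard spectral argument gives $\spec(\kappa(m))=\overline{\bigcup_n\spec(\pi_n(m))}=\overline{\bigcup_n\spec(\Gamma_n)}$. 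On the other hand $\kappa$ is weakly equivalent to the direct integral $\int_{\partial T}\pi_x\,d\mu(x)$ of the orbital representations $\pi_x$ on $\ell^2(G_\omega x)$ --- this uses amenability of $G_\omega$ --- while the closed set $\spec(\pi_x(m))$ is $G_\omega$-invariant in $x$, hence $\mu$-a.e.\ constant by ergodicity; therefore this almost-everywhere value equals $\spec(\kappa(m))$. Since $\mu(G_\omega\mathrm{r})=0$, Lemma~\ref{LmSchreierGamman} gives $\Gamma_x\cong\Upsilon_\infty$ for $\mu$-a.e.\ $x$, and we conclude $\spec(\Gamma_x)=\overline{\bigcup_n\spec(\Gamma_n)}=[-\tfrac12,0]\cup[\tfrac12,1]$ for $\mu$-a.e.\ $x$ --- the exceptional orbit $G_\omega\mathrm{r}$, on which $\Gamma_x\cong\Upsilon_\infty^0$ may carry an extra bound state, being exactly why the statement is only almost everywhere.
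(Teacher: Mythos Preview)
The paper does not prove this proposition; it is imported directly from \cite{BartholdiGrigorchuk-Spectrum-00}, Theorem~3.6 and Corollary~4.4, so there is no in-paper argument to compare against. Your proposal is effectively a reconstruction of those cited results.

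The first half of your sketch --- the two-parameter pencil, the Schur-complement renormalization producing a quadratic substitution $F$, and the identification of $\overline{\bigcup_n\spec(\Gamma_n)}$ with the closure of the backward $F$-orbit of $\spec(\Gamma_1)$ --- is precisely the Bartholdi--Grigorchuk method and is correct as outlined; the ``main obstacle'' you flag is genuine but routine once $F$ is written down explicitly.

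Your second half takes a more abstract route than the cited source. The identities $\spec(\kappa(m))=\overline{\bigcup_n\spec(\pi_n(m))}$ (via the exhausting invariant filtration $W_n$) and the $\mu$-a.e.\ constancy of $x\mapsto\spec(\pi_x(m))$ (via ergodicity) are fine. The soft spot is the claimed weak equivalence $\kappa\sim\int_{\partial T}\pi_x\,d\mu$, which you justify by ``amenability of $G_\omega$''. Amenability yields $\kappa\prec\lambda_{G_\omega}$ and $\pi_x\prec\lambda_{G_\omega}$, but not directly either containment between $\kappa$ and the direct integral; moreover the action on $\partial T$ is not essentially free (boundary stabilizers in branch groups are nontrivial), so $\int\pi_x\,d\mu$ is not simply an amplification of $\lambda_{G_\omega}$. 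As written this step is a gap that needs either a reference or an independent argument. A cheaper bypass, closer to what the cited reference actually does, is available: since $\Gamma_x\cong\Upsilon_\infty$ for $\mu$-a.e.\ $x$ by Lemma~\ref{LmSchreierGamman}, and $\Upsilon_\infty$ is invariant under translation by $2$, a Floquet--Bloch reduction expresses $\spec(\Upsilon_\infty)$ as the union of eigenvalues of a one-parameter family of $2\times2$ matrices and gives the two intervals directly.
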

\noindent Proposition \ref{PropSpecGamma} will be used to prove Theorem \ref{PropGrigSpec}.

\section{Graph coverings}\label{SecGraphs}
Let us recall some definitions and introduce some notations. Let $\Gamma=(V,E)$ be a non-directed graph where $V$ is the vertex set and $E$ is the edge set of $\Gamma$. In this paper we consider only graphs  of uniformly bounded degree. We allow $\Gamma$ to have multiple edges and loops. In this note we assume all graphs to have uniformly bounded degree. For a vertex $v$ of $\Gamma$ let $E_v$ stand for the set of edges adjacent to $v$. For $e\in E_v$ we consider $v$ as the first vertex of $e$ and denote by $r_v(e)$ be the second vertex of $e$, \ie $r_v(e)=v$ if $e$ is a loop and $r_v(e)$ is the vertex of $e$ not equal to $v$ otherwise. The Markov operator on the space $l^2(V)$ (which we also denote by $l^2(\Gamma)$) is the bounded linear operator $M=M_\Gamma$ given by $$(Mf)(v)=\frac{1}{d(v)}\sum\limits_{e\in E_v}f(r_v(e)),\;\;\text{for}\;\; f\in l^2(\Gamma),v\in V,$$ where $d(v)$ is the degree of the vertex $v$. Here we assume that each loop at a vertex $v$ contributes one edge to $E_v$ and contributes $1$ to the degree of $v$.

 More generally, assume that there is a weight $\alpha_{v,e}\in\mathbb C$ associated to every pair $(e,v)\in E\times V$ where $v$ is adjacent to $e$. We call $\Gamma$ a weighted graph in this case. We say that a weighted graph $\Gamma$ is uniformly bounded if it has a uniformly bounded degree and the set of weights $\alpha=\{\alpha_{v,e}\}$ is uniformly bounded. For a uniformly bounded weighted graph $\Gamma$ one can introduce a bounded Laplace type operator $H_\alpha$ on $l^2(\Gamma)$ by
$$(H_\alpha f)(v)=\sum\limits_{e\in E_v}\alpha_{v,e}f(r_v(e)),\;\;\text{for}\;\; f\in l^2(\Gamma).$$ The operator $H_\alpha$ is a generalization of the discrete Laplacian on $\Gamma$ or of the Markov operator on $\Gamma$. This bounded linear operator is self-adjoint whenever $\alpha_{v,e}=\overline{\alpha_{w,e}}$ for every triple $e,v,w$ where $e$ is an edge connecting vertices $v$ and $w$. It coincides with the Markov operator $M$ if for any pair $v,e$ of a vertex and an edge adjacent to it one has $\alpha_{v,e}=\frac{1}{\deg(v)}$.

For any operator $A$ define by $\sigma(A)$ the spectrum of $A$.
 We notice that when checking if $\sigma(H_\alpha)$ contains a particular value $\lambda\in\mathbb C$ we can restrict our attention to the case of a self-adjoint (and even a positive) operator $H_\alpha$.
 \begin{lem}\label{LmEquivNorm} Let $A$ be any bounded nonzero linear operator on a Hilbert space and let $R\geqslant 2\|A\|$. Then
$$\lambda\in \sigma(A)\;\;\Leftrightarrow\;\;1\in \sigma(\mathrm{I}-
\tfrac{1}{R^2}(A-\lambda\mathrm{I})(A-\lambda\mathrm{I})^{*}),$$
where $\mathrm{I}$ is the identity operator.
\end{lem}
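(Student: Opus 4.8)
The plan is to verify the two implications directly from the definition of the spectrum, using only elementary manipulations of bounded operators on a Hilbert space. Write $B = (A-\lambda\mi)$, so that $B$ is a bounded operator with $\|B\| \leqslant \|A\| + |\lambda|$; however, the cleaner bound to use is the following observation: if $\lambda \in \sigma(A)$ then $|\lambda| \leqslant \|A\|$, so $\|B\| \leqslant 2\|A\| \leqslant R$, whereas if $\lambda \notin \sigma(A)$ we will not need any norm bound on $B$ at all. Set $T = \mi - \tfrac{1}{R^2} B B^{*}$, which is self-adjoint (in fact, as the statement notes, one checks $0 \leqslant T \leqslant \mi$ once $\|B\| \leqslant R$, but positivity is not strictly needed for the argument). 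The claim to prove is the equivalence $\lambda \in \sigma(A) \Leftrightarrow 1 \in \sigma(T)$.

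First I would prove the contrapositive of the forward direction in the convenient form: $1 \notin \sigma(T) \Rightarrow \lambda \notin \sigma(A)$. Note $1 \notin \sigma(T)$ means $\mi - T = \tfrac{1}{R^2} B B^{*}$ is invertible, hence $B B^{*}$ is invertible, hence $B^{*}$ is bounded below and $B$ is surjective. Symmetrically I would also want $B$ injective with closed range; for that, observe that $B$ injective is equivalent to $B^{*}$ having dense range, and $B^{*}$ here has range equal to all of the Hilbert space since $BB^*$ is invertible forces $B^*$ surjective — wait, more carefully: $BB^*$ invertible gives $B^*$ injective and $B$ surjective, but to get $B$ injective one argues that $B^* B$ need not be invertible in general. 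The correct symmetric statement requires using \emph{both} $BB^*$ and $B^*B$; this is where one must be slightly careful, and here the self-adjointness reduction in the paragraph preceding the lemma is doing real work. Since we may assume $A = A^{*}$ (as the paper points out, checking membership of a real $\lambda$ in the spectrum of a self-adjoint operator suffices, because the general case reduces to it, e.g.\ via considering $A A^{*}$), we have $B = B^{*}$ and then $BB^{*} = B^{2}$; invertibility of $B^{2}$ is equivalent to invertibility of $B$, giving $\lambda \notin \sigma(A)$ at once. Conversely, if $\lambda \notin \sigma(A)$ then (in the self-adjoint case) $B$ is invertible, so $B B^{*} = B^2$ is invertible, so $\mi - T = \tfrac{1}{R^2}B^2$ is invertible, i.e.\ $1 \notin \sigma(T)$.

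So the streamlined proof is: reduce to $A$ self-adjoint and $\lambda \in \mathbb{R}$ (invoking the remark before the lemma); in that case $B = A - \lambda\mi$ is self-adjoint, $BB^{*} = B^{2}$, and the chain of equivalences
\[
\lambda \in \sigma(A) \iff B \text{ not invertible} \iff B^{2} \text{ not invertible} \iff \tfrac{1}{R^2}B^{2} \text{ not invertible} \iff 1 \in \sigma\big(\mi - \tfrac{1}{R^2}B^{2}\big)
\]
is immediate. The role of the hypothesis $R \geqslant 2\|A\|$ is only to guarantee that $\mi - \tfrac{1}{R^2} B B^{*}$ lies in $[0,\mi]$ when $\lambda \in \sigma(A)$ (so that the operator on the right is a genuine ``Laplace/Markov-normalized'' object and $1 = \max \sigma$ is the relevant spectral edge), which is what makes the lemma useful downstream for reducing spectral computations to positive operators; it is not needed for the bare equivalence.

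I do not expect a serious obstacle here: the only subtlety is the one flagged above — the naive ``$BB^{*}$ invertible $\Rightarrow B$ invertible'' is false for general bounded $B$, and the clean way around it is precisely the self-adjointness reduction, after which everything collapses to the spectral mapping observation for the polynomial $t \mapsto 1 - t^{2}/R^{2}$ applied to $A - \lambda\mi$. If one wished to avoid the reduction and argue for general $A$, one would instead work with $C = \mi - \tfrac{1}{R^{2}} B^{*} B$ and $T = \mi - \tfrac{1}{R^2} B B^{*}$ together, using that $B$ is invertible iff both $B^{*}B$ and $BB^{*}$ are, but the self-adjoint reduction the authors have set up makes this unnecessary.
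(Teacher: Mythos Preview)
The paper does not prove this lemma --- it is left as an exercise --- so there is nothing to compare against and your argument must stand on its own. It has a genuine gap. At the ``wait, more carefully'' moment you correctly notice that $BB^{*}$ invertible does not force $B$ invertible, but your fix --- reducing to self-adjoint $A$ by invoking the remark preceding the lemma --- is circular: that remark is precisely the claim the lemma is introduced to justify (the paper writes ``Using Lemma~\ref{LmEquivNorm} we obtain\ldots'' immediately afterward), not an independent tool available inside the proof.

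In fact the equivalence as stated is false for general bounded $A$, so no proof is possible without an extra hypothesis. Take $A$ to be the backward shift $L$ on $\ell^{2}(\mathbb{N})$, $\|A\|=1$, $R=2$, $\lambda=0$. Then $0\in\sigma(A)$ (since $Le_{0}=0$), but $BB^{*}=LL^{*}=\mi$ (the adjoint $L^{*}$ is the forward shift), so $\mi-\tfrac{1}{4}BB^{*}=\tfrac{3}{4}\mi$ and $1\notin\sigma(\tfrac{3}{4}\mi)$. Only the direction $1\in\sigma(T)\Rightarrow\lambda\in\sigma(A)$ holds unconditionally. Your spectral-mapping chain via $t\mapsto 1-t^{2}/R^{2}$ is correct exactly when $A$ is \emph{normal}, so that $BB^{*}=B^{*}B$; that is the honest extra hypothesis under which the lemma holds, and the way to read it for the application in Theorem~\ref{ThGraphCov}.
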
\noindent The proof is straightforward and we leave it as an exercise to the reader. Using Lemma \ref{LmEquivNorm} we obtain that for any uniformly bounded weighted graph $\Gamma$ with the set of weights $\alpha$, any $\lambda\in\mathbb C$ and any sufficiently large $R$ one has:
$$\lambda\in \sigma(H_\alpha)\;\;\Leftrightarrow\;\;1\in \sigma(\mathrm{I}-
\tfrac{1}{R^2}(H_\alpha-\lambda\mathrm{I})
(H_\alpha-\lambda\mathrm{I})^{*}).$$ The operator $\mathrm{I}-
\tfrac{1}{R^2}(H_\alpha-\lambda\mathrm{I})
(H_\alpha-\lambda\mathrm{I})^{*}$ is a positive operator of the form $H_\beta$ for some uniformly bounded set of weights on a new graph $\Gamma'=(V',E')$ with $V'=V$ and $E'$ consisting of all possible pairs of adjacent edges from $E$.

Recall that a graph $\widetilde\Gamma=(\widetilde E,\widetilde V)$ is a covering graph of a graph $\Gamma=(E,V)$ (synonymously, $\widetilde \Gamma$ covers $\Gamma$) if there exist surjective maps $\phi_1:\widetilde V\to V$ and $\phi_2:\widetilde E\to E$  which form a local isomorphism, \ie $\phi_2$ maps bijectively $\widetilde E_{\tilde v}$ onto $E_{\phi_1(\tilde v)}$ for each $v\in \widetilde V$. To simplify the notations we will use one letter $\phi$ for both of the maps $\phi_1$ and $\phi_2$. The map $\phi$ is called a covering map. It is straightforward to check that covering graphs have the following lifting property. If $\widetilde\Gamma$ is a covering graph for $\Gamma$ via a covering map $\phi$ then for any vertex $v$ in $\Gamma$, any path $\gamma$ in $\Gamma$ starting at $v$ and any $\tilde v\in\phi^{-1}(v)$ there exists a unique lift of the path $\gamma$ to a path starting at $\tilde v$.

Let $\phi$ be a covering map from $\widetilde\Gamma$ to $\Gamma$. Given systems of weights $\alpha=\{\alpha_{v,e}\},\tilde\alpha=\{\tilde\alpha_{\tilde v,\tilde e}\}$ associated to graphs $\Gamma$ and $\widetilde \Gamma$ correspondingly we say that the weighted graph $\widetilde\Gamma$ covers the weighted graph $\Gamma$ if $\tilde\alpha_{\tilde e,\tilde v}=\alpha_{\phi(\tilde e),\phi(\tilde v)}$ for any edge $\tilde e$ of $\widetilde \Gamma$ and any vertex $\tilde v$ of $\tilde e$. In this section we investigate how the spectra of the operators of the form $H_\alpha$ change under coverings.

For a vertex $v$ of a graph $\Gamma=(V,E)$ and a number $n\in\mathbb N\cup\{0\}$ denote by $B_n(v)$ the ball of radius $n$ around $v$. For a set of vertices $A\subset V$ let $$B_n(A)=\bigcup\limits_{v\in A}B_n(v).$$ A graph has subexponential growth if there exists a vertex $v\in V$ such that \begin{equation}\label{EqSubexp}\liminf\limits_{n\to\infty}(|B_n(v)|)^{\frac{1}{n}}=1,\end{equation} where $|\cdot|$ stands for the number of elements of a set. Notice that for a graph of uniformly bounded degree the condition \eqref{EqSubexp} is equivalent for all vertices $v$. The definition of an amenable graph was given in \cite{CeccheriniGrigorchukHarpe-Amenability-99} where various equivalent conditions of amenability of graphs of uniformly bounded degree were established. Recall that a graph $\Gamma$ of uniformly bounded degree is amenable if there exists an increasing sequence $(F_k)_{k\in\mathbb N}$ of finite sets of vertices (called F{\o}lner sequence) such that
$$\bigcup\limits_{k\in\mathbb N}F_k=V\;\;\text{and}\;\;\lim\limits_{k\to\infty}\frac{| B_1(F_k)\setminus F_k|}{|F_k|}=0.$$ Every graph of subexponential growth is amenable since it has a F{\o}lner sequence of the form $F_k=B_{n_k}(v)$, where the sequence $n_k$ is such that $\frac{|B_{n_k(v)}|}{|B_{n_k+1}(v)|}\to 1$ when $k\to\infty$. $\Gamma$ is amenable if and only if for the Markov operator $M$ associated to $\Gamma$ one has $\|M\|=r(M)=1$, where $r(M)$ is the spectral radius of $M$ (see \eg \cite{CeccheriniGrigorchukHarpe-Amenability-99}).

The proof of Theorem \ref{ThGraphCov} is inspired by the proof of Proposition 3.9 from \cite{BartholdiGrigorchuk-Spectrum-00}.
\begin{proof}[Proof of Theorem \ref{ThGraphCov}] First, we notice that by Lemma \ref{LmEquivNorm} without loss of generality we may assume that $H_1$ and $H_2$ are positive semi-definite. In addition, multiplying all the weights of $\Gamma_1$ and $\Gamma_2$ by the same small number $c>0$ we may assume that all weights are smaller than one by absolute value. Let $\phi:\Gamma_1\to\Gamma_2$ be the covering. Denote by $\|\cdot\|$ the $l^2$-norm on either $l^2(\Gamma_1)$ or $l^2(\Gamma_2)$.

 $a)$ Assume that $\Gamma_1$ is amenable and $\Gamma_2$ is finite. Let $\lambda\in\sigma(H_2)$. Then there exists $f\in l^2(\Gamma_2)$ of norm $1$ such that $H_2f=\lambda f$. Fix a vertex $w\in \Gamma_2$ such that $f(w)\neq 0$. Let $\{F_k\}$ be a F{\o}lner sequence for $\Gamma_1$. Let $N$ be the number of vertices of $\Gamma_2$.
 For $k\in\mathbb N$ introduce
\begin{align*}f_k\in l^2(\Gamma_1),\;f_k(x)=\left\{\begin{array}{ll}f(\phi(x)),&\text{if}\;x\in B_{N+1}(F_k),\\
0,&\;\text{otherwise};\end{array}\right. \\ A_{k,r}=\{x\in  B_r(F_k):\phi(x)=w\},\;\;\alpha_{k,r}=|A_{k,r}|.\end{align*}
Observe that for any $x\in B_N(F_k)$ one has $$(H_1f_k)(x)-\lambda f_k(x)=(H_2f)(\phi(x))-\lambda f(\phi(x))=0.$$
Therefore,
$$\|H_1f_k-\lambda f_k\|^2=\sum\limits_{x\in B_{N+2}(F_k)\setminus B_N(F_k)}|(H_1f_k)(x)-\lambda f_k(x)|^2.$$ For each $y\in \Gamma_2$ there exists a path $\gamma_y$ in $\Gamma_2$ of length at most $N$ joining $y$ and $w$. For every $y\in \Gamma_2$ and $x\in\phi^{-1}(y)$ there exists a unique lift of $\gamma_y$ to a path in $\Gamma_1$ starting at $x$. Denote by $w_x$ the other end-point of this lift. If $x\in F_k$ then $w_x$ belongs to $A_{k,N}$. Moreover, for any distinct vertices $t,x\in\phi^{-1}(y)\cap F_k$ one has $w_x\neq w_t$. We obtain that for every $y\in \Gamma_2$ one has:
$$|\phi^{-1}(y)\cap F_k|\leqslant \alpha_{k,N}.$$ Using the above inequality for all $y\in \Gamma_2$ we get:
$|F_k|\leqslant N\alpha_{k,N}.$
It follows that $$\|f_k\|^2\geqslant \alpha_{k,N}|f(w)|^2\geqslant\frac{1}{N}|f(w)|^2|F_k|.$$
Setting $\tilde f_k=\frac{f_k}{\|f_k\|}$ we arrive at:
$$\|H_1\tilde f_k-\lambda\tilde f_k\|^2\leqslant \frac{C| B_{N+2}(F_k)\setminus
B_N(F_k)|}{|F_k|},$$ where $C$ depends only on $\Gamma_1,\Gamma_2$ and $f$. When $k\to\infty$ the latter converges to $0$ since $F_k$ is a F{\o}lner sequence. It follows that $\lambda\in\sigma(H_1)$ which finishes the proof of part $a)$.

$b)$ Let $\Gamma_1$ have subexponential growth. Fix $\epsilon>0$  and $\lambda\in \sigma(H_2)$. Since $H_2$ is self-adjoint there exists $f\in l^2(\Gamma_2)$ of $l^2$-norm 1 such that
$\|H_2f-\lambda f\|<\epsilon$. Without loss of generality we may assume that the set  $S=\supp (f)$ of vertices at which $f$ is nonzero is finite. Fix any vertex $v\in\Gamma_1$ and $N$ such that $S\subset B_N(\phi(v))$.

For an integer $k\geqslant 0$ introduce
\begin{align*}f_k\in l^2(\Gamma_1),\;f_k(x)=\left\{\begin{array}{ll}f(\phi(x)),
&\text{if}\;x\in B_k(v),\\
0,&\;\text{otherwise};\end{array}\right.
 \\ A_k=\{x\in  B_k(v):\phi(x)=\phi(v)\},\;\;\alpha_k=|A_k|.\end{align*} We also set $A_k=\varnothing$ and $\alpha_k=0$ for $k<0$.

Observe that for every $x\in B_{k-1}(v),k\geqslant 1,$ one has
 $$|(H_1f_k)(x)-\lambda f_k(x)|=|(H_2f_k)(\phi(x))-\lambda f(\phi(x))|.$$
For every $y\in S$ fix a path $\gamma_y\subset B_N(\phi(v))$ of length at most $N$ joining $y$ and $\phi(v)$. By the lifting property for every $y\in S$, $l\in\mathbb N$ and $x\in\phi^{-1}(y)\cap B_{l}(v)$ there exists a unique lift of $\gamma_y$ to a path in $\Gamma_1$ starting at $x$. Denote the other end-point of this lift by $v_x$. Observe that $$v_x\in A_{d+N}\setminus A_{d-N}\subset A_{l+N},$$ where $d=d(x,v_x)$ is the geodesic distance between $x$ and $v_x$ in $\Gamma_1$. Moreover, for a fixed $y\in S$ and distinct vertices $x,t\in\phi^{-1}(y)\cap B_{l}(v)$ one has $v_x\neq v_t$. Notice also that for every $w\in \phi^{-1}(\phi(v))\cap B_l(v)$ and every $y\in S$ there exists a unique $t\in \phi^{-1}(y)\cap B_{l+N}(v)$ such that $v_t=w$. We obtain that for every $y\in S$ and every $k>N$ one has:
\begin{align*}|\phi^{-1}(y)\cap B_{k-N}(v)|\leqslant \alpha_{k}\;\;\text{and}\\ |\phi^{-1}(y)\cap B_k(v)\setminus B_{k-N}(v)|\leqslant \alpha_{k+N}-\alpha_{k-2N}.\end{align*}
Therefore,
\begin{align*}\|H_1f_k-\lambda f_k\|^2=\sum\limits_{x\in B_{k-N}(v)}|(H_1f_k)(x)-\lambda f_k(x)|^2+\\
 \sum\limits_{x\notin B_{k-N}(v)}|(H_1f_k)(x)-\lambda f_k(x)|^2 \leqslant \alpha_k\|H_2f-\lambda f\|^2+2|S|(\alpha_{k+N}-\alpha_{k-2N}).\end{align*}
On the other hand, by a similar argument, $\|f_k\|^2\geqslant \alpha_{k-N}\|f\|^2=\alpha_{k-N}$. Setting $\tilde{f}_k=f_k/\|f_k\|$ we arrive at:
$$\|H_1\tilde f_k-\lambda \tilde f_k\|^2\leqslant \epsilon^2\frac{\alpha_k}{\alpha_{k-N}}+
|S|\frac{\alpha_{k+N}-\alpha_{k-2N}}{\alpha_{k-N}}.$$
If $\liminf\limits_{k\to\infty}\frac{\alpha_{k+1}}{\alpha_k}>1$ then there exists $C>0$ and $\mu>1$ such that $\alpha_k\geqslant C\mu^k$ (and therefore $|B_k(v)|\geqslant C\mu^k$) for any $k>0$. Therefore, $\Gamma_1$ has exponential growth. Thus, $\lim\limits_{k\to\infty}\frac{\alpha_{k+1}}{\alpha_k}=1$. Taking into account that $\alpha_l$ is non-decreasing in $l$ we obtain that there exists $k$ such that $\|H_1\tilde f_k-\lambda \tilde f_k\|<2\epsilon$. This shows that $\lambda\in \sigma(H_1)$ and finishes the proof of part $b)$.
\end{proof}

%%%%%%%%%%%%%%%%%%%%%%%%%%%%%%%%%%%%%%%%%%%%%%%%%%%%%%%%%%%%%%%%%%%%%%%%%%%%%%%%%5
\section{Proof of Theorem \ref{PropGrigSpec}}
Given a unitary representation $\pi$ of a group $G$ we extend it to a representation of $\mathbb C[G]$ by linearity. Recall that a unitary representation $\rho$ is weakly contained in a unitary representation $\eta$ of a discrete group $G$ if and only if $$\sigma(\rho(m))\subset\sigma(\eta(m))\;\;\text{for every}\;\;m\in\mathbb C[G].$$

\begin{proof}[Proof of Theorem \ref{PropGrigSpec}] Consider $\tilde t=\frac{\tilde b+\tilde c+\tilde d-1}{2}\in\mathbb C[G]$. One can easily verify that $(\tilde t)^2=1$. Let $\lambda_G$ be the regular representation of $G$. Then $\lambda_G(\tilde t)$ is a square root of the identity and self-adjoint, therefore it is a unitary operator. Let $D_\infty=<s,t:s^2=t^2=1>$ be the infinite dihedral group. We obtain that the assignment $\pi(s)=\lambda_G(\tilde a),\pi(t)=\lambda_G(\tilde t)$ extends to a unitary representation $\pi$ of $D_\infty$. By Hulanicki Theorem \ref{ThHulanicki}, $\pi\prec\lambda_{D_\infty}$. Therefore,
\begin{align*}\sigma(\lambda_G(\tfrac{1}{4}(\tilde a+\tilde b+\tilde c+\tilde d)))=
\sigma(\pi(\tfrac{1}{4}\tilde a+\tfrac{1}{2}\tilde t+\tfrac{1}{4}))\subset
\sigma(\lambda_{D_\infty}(\tfrac{1}{4}s+\tfrac{1}{2} t+\tfrac{1}{4}))%[-\tfrac{1}{2},0]\cup [\tfrac{1}{2},1]
.\end{align*}
Notice that $\|\lambda_{D_\infty}(\tfrac{1}{4}s+\tfrac{1}{2} t)\|\leqslant\tfrac{1}{4}\|\lambda_{D_\infty}(s)\|+
\tfrac{1}{2}\|\lambda_{D_\infty}(t)\|= \tfrac{3}{4}$, therefore $\sigma(\lambda_{D_\infty}(\tfrac{1}{4}s+\tfrac{1}{2} t))\subset [-\tfrac{3}{4},\tfrac{3}{4}]$. Observe also that for any $\lambda\in (-\tfrac{1}{4},\tfrac{1}{4})$ we have that
$$\lambda_{D_\infty}(\tfrac{1}{4}s+\tfrac{1}{2} t-\lambda)=\lambda_{D_\infty}(t)\lambda_{D_\infty}(\tfrac{1}{4}ts-\lambda t+\tfrac{1}{2})$$ is invertible, since $\|\lambda_{D_\infty}(\tfrac{1}{4}ts-\lambda t)\|\leqslant \tfrac{1}{4}\|\lambda_{D_\infty}(ts)\|+
|\lambda|\|\lambda_{D_\infty}(t)\|<\tfrac{1}{2}$. We obtain that
$$\sigma(\lambda_{D_\infty}(\tfrac{1}{4}s+\tfrac{1}{2} t))\subset [-\tfrac{3}{4},-\tfrac{1}{4}]\cup [\tfrac{1}{4},\tfrac{3}{4}]\;\Rightarrow\;\sigma(\lambda_{D_\infty}(\tfrac{1}{4}s+\tfrac{1}{2} t+\tfrac{1}{4}))\subset [-\tfrac{1}{2},0]\cup [\tfrac{1}{2},1].$$

On the other hand, the surjection $\varphi$ induces a graph covering $\Gamma(G,\widetilde S)\to \Gamma( G_\omega,S_\omega)$, where $\widetilde S=<\tilde a,\tilde b,\tilde c,\tilde d>$ and $S_\omega=<a,b_\omega,c_\omega,d_\omega>$. Let $\Gamma_n$ be the Schreier graph of the action of $ G_\omega$ on the $n$th level of the binary regular rooted tree. Then $\Gamma( G_\omega,S_\omega)$ covers $\Gamma_n$. From Theorem \ref{ThGraphCov} we obtain that $\spec(\Gamma_n)\subset \spec(\Gamma(G,\widetilde S))$ for every $n$. By Proposition \ref{PropSpecGamma},  we have
$$\overline{\bigcup\limits_{n\in\mathbb N}\spec(\Gamma_n)}= [-\tfrac{1}{2},0]\cup [\tfrac{1}{2},1].$$ Therefore, $\spec(G)=\spec(\Gamma(G,\widetilde S))\supset [-\tfrac{1}{2},0]\cup [\tfrac{1}{2},1]$. This finishes the proof.
\end{proof}
%%%%%%%%%%%%%%%%%%%%%%%%%%%%%%%%%%%%%%%%%%%%%%%%%%%%%%%%%%%%%%%%%%%%%%%%%%%%%%%5
\section{Proof of Theorem \ref{ThMain}}
To prove Theorem \ref{ThMain}, for each $\omega\in\Omega_2$ we construct a continuum of pairwise non-isomorphic groups covering $G_\omega$ and satisfying the conditions of Theorem \ref{PropGrigSpec}.

Introduce the group $$\Lambda=<\hat a,\hat b,\hat c,\hat d:\hat a^2=\hat b^2=\hat c^2=\hat d^2=\hat b\hat c\hat d=1>.$$ Observe that $\Lambda\simeq\mathbb Z_2*(\mathbb Z_2\times\mathbb Z_2)$, where $*$ denotes the free product. For $n\in\mathbb N\cup\{\infty\}$, where $\infty$ denotes the cardinality of $\mathbb N$, we denote by $\mathbb F_n$ the free group with $n$ generators. An easy application of the Reidemeister-Schreier method implies:
\begin{lem}\label{LmLambda'} The commutator subgroup $\Lambda'=[\Lambda,\Lambda]$ is freely generated by $[\hat a,\hat b],[\hat a,\hat c],[\hat a,\hat d]$, and so is isomorphic to $\mathbb F_3$.
\end{lem}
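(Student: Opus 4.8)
The plan is to use the Reidemeister–Schreier rewriting process on the presentation
$$\Lambda=\langle \hat a,\hat b,\hat c,\hat d:\hat a^2=\hat b^2=\hat c^2=\hat d^2=\hat b\hat c\hat d=1\rangle\simeq \mathbb Z_2*(\mathbb Z_2\times\mathbb Z_2),$$
applied to the subgroup $\Lambda'=[\Lambda,\Lambda]$, whose index is $|\Lambda/\Lambda'|=|\mathbb Z_2\times(\mathbb Z_2\times\mathbb Z_2)|=8$. First I would compute the abelianization: since $\hat a,\hat b,\hat c,\hat d$ all become involutions and $\hat b\hat c\hat d=1$ forces $\hat d=\hat b\hat c$ in the abelianization, one gets $\Lambda^{\mathrm{ab}}\simeq\mathbb Z_2^3$ generated by the images of $\hat a,\hat b,\hat c$. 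So $\Lambda'$ has index $8$ and a Schreier transversal can be taken to be $T=\{1,\hat a,\hat b,\hat c,\hat a\hat b,\hat a\hat c,\hat b\hat c,\hat a\hat b\hat c\}$ (choosing, for each coset, a word of minimal length in $\hat a,\hat b,\hat c$; note $\hat d$ never needs to appear in the transversal because $\hat d=\hat b\hat c$ mod $\Lambda'$, and any occurrence of $\hat d$ is rewritten via $\hat d=(\hat b\hat c)^{-1}\cdot(\hat b\hat c\hat d)=\ldots$, or more simply one just works with the generating set $\hat a,\hat b,\hat c$ of $\Lambda$, since $\Lambda=\langle\hat a,\hat b,\hat c\rangle$ already).

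The main computation is then routine bookkeeping. The Reidemeister–Schreier generators of $\Lambda'$ are the elements $x_{t,s}=ts\,\overline{ts}^{\,-1}$ for $t\in T$ and $s\in\{\hat a,\hat b,\hat c\}$, where $\overline w$ denotes the transversal representative of the coset $w\Lambda'$; those with $ts\in T$ (i.e.\ $x_{t,s}=1$) are discarded. This leaves $3\cdot 8-8=16$ nontrivial generators a priori, and the relators of $\Lambda'$ are obtained by rewriting the conjugates $t\,r\,t^{-1}$ of the defining relators $r\in\{\hat a^2,\hat b^2,\hat c^2,(\hat b\hat c)^{\text{?}}\}$ — more carefully, the presentation of $\Lambda$ on the generating set $\hat a,\hat b,\hat c$ is $\langle \hat a,\hat b,\hat c\mid \hat a^2,\hat b^2,\hat c^2,(\hat b\hat c)^2\rangle$ (the relation $(\hat b\hat c)^2=1$ encodes that $\hat b,\hat c$ commute, equivalently $\hat d^2=1$). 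Rewriting the $8$ conjugates of each of these four relators and using Tietze transformations to eliminate redundant generators, I expect all relators to become trivial words in the surviving generators, leaving a free group; a rank count via the Nielsen–Schreier formula, $\mathrm{rk}(\Lambda')=1+[\Lambda:\Lambda'](\mathrm{rk}(\Lambda)-1)$, must be done with the \emph{correct} Euler characteristic of $\Lambda$, namely $\chi(\Lambda)=\chi(\mathbb Z_2)+\chi(\mathbb Z_2\times\mathbb Z_2)-\chi(\text{pt})=\tfrac12+\tfrac14-1=-\tfrac14$, so $\chi(\Lambda')=8\cdot(-\tfrac14)=-2$, giving $\mathrm{rk}(\Lambda')=1-\chi(\Lambda')=3$. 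This confirms $\Lambda'\simeq\mathbb F_3$.

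To identify the free generators explicitly as $[\hat a,\hat b]$, $[\hat a,\hat c]$, $[\hat a,\hat d]$, I would exhibit these three elements among (or as short products of) the Reidemeister–Schreier generators and check they generate: e.g.\ $[\hat a,\hat b]=\hat a\hat b\hat a\hat b^{-1}$ corresponds, under the rewriting process with transversal $T$, to one of the $x_{t,s}$ (after the Tietze reductions). Since $[\hat a,\hat d]=[\hat a,\hat b\hat c]=[\hat a,\hat b]\cdot{}^{\hat b}[\hat a,\hat c]$ in $\Lambda'$ and $\{[\hat a,\hat b],[\hat a,\hat c],[\hat a,\hat d]\}$ has the same span in $\Lambda^{\mathrm{ab}}$-twisted sense as $\{[\hat a,\hat b],[\hat a,\hat c]\}$ together with one more, a short argument shows this triple is a free basis exactly when the corresponding $3\times 3$ matrix over $\mathbb Z[\Lambda^{\mathrm{ab}}]$ (the Jacobian/Fox-derivative matrix) is invertible; I would verify this directly from the Fox calculus. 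The main obstacle, such as it is, is purely organizational: carrying out the $8$-fold coset enumeration without error and correctly eliminating the eight ``trivial'' generators coming from the involution relations $\hat a^2,\hat b^2,\hat c^2$ (each of which, conjugated, forces identifications among the $x_{t,s}$), so that precisely three independent generators survive and they can be pinned down as the claimed commutators.
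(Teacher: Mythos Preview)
Your plan is precisely what the paper does: it states the lemma and says only ``An easy application of the Reidemeister--Schreier method implies'' it, giving no further details. Your outline is a correct and considerably more explicit execution of that same method (the Euler-characteristic count $\chi(\Lambda')=8\cdot(-\tfrac14)=-2$ is the cleanest way to get the rank, and freeness follows either from your Tietze reductions or, more conceptually, from Kurosh/Bass--Serre since $\Lambda'$ meets each vertex group trivially); the only slip is the bookkeeping $3\cdot 8-8=16$ --- a Schreier tree on $8$ cosets has $7$ edges, so there are $24-7=17$ nontrivial Schreier generators before the Tietze eliminations.
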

\noindent
The map $\psi(\hat a)=a,\psi(\hat b)=b,\psi(\hat c)=c,\psi(\hat d)=d$ extends to a homomorphism $\psi$ from $\Lambda$ onto $ G_\omega$. Let $\Delta_\omega$ be the kernel of $\psi$. To study $\Delta_\omega$, let us recall the presentation by generators and relations for $G_\omega$ constructed in \cite{Muntyan-Dissertation-09}.

An infinite word $\omega\in \Omega$ is called \emph{almost constant} if for some $k\in\{0,1,2\}$ one has $\omega_i=k$ for all but finitely many $i\in\mathbb N$.  For every non-constant word $\omega\in \Omega$ there exists a unique permutation $(x,y,z)=(x_\omega,y_\omega,z_\omega)$ of the set $\{0,1,2\}$ and $n>2$ such that $\omega$ is of one of the following three forms:
\begin{equation}\label{EqOmegaTypes}1)\; \omega=\underbrace{xx\ldots xy}_n\ldots,\;\;2)\; \omega=\underbrace{xy\ldots yx}_n\ldots,\;\;3)\;\omega= \underbrace{xy\ldots yz}_n\ldots.
 \end{equation} We will say that $\omega$ is of the type $1)$, $2)$ or $3)$ respectively. Identify $0$ with $d$, $1$ with $c$ and $2$ with $b$. For an  alphabet $\mathcal A$ let $\mathcal A^*$ be the collection of all words of finite length over $\mathcal A$. Following Proposition II.22 from \cite{Muntyan-Dissertation-09} introduce collections of words $U_1^\omega\subset \{a,b_\omega,c_\omega,d_\omega\}^*$ as follows. Correspondingly to the type of $\omega$ we set
\begin{itemize} \item[$1)$] $U_1^\omega=\{(xaya)^4,(xa(ya)^{2k})^4,k=1,\ldots,2^{n-1}\}$,
\item[$2)$] $U_1^\omega=\{(xayaya)^4,(xaya)^{2^n}\}$,
\item[$3)$] $U_1^\omega=\{(xayaya)^4,(zaya)^{2^n}\}$.
\end{itemize} Notice that the above formulas define $U_1^\omega$ uniquely for every non-constant $\omega\in\Omega$. Denote by $U_0^\omega$ the set of standard relations in $G_\omega$: $$U_0^\omega=\{a^2,b^2,c^2,d^2,bcd\}.$$

For $\omega\in\Omega$ let $\omega'$ be the word obtained from $\omega$ by removing the first letter.  Introduce a substitution $\phi_\omega: \{a,b_{\omega'},c_{\omega'},d_{\omega'}\}^*\to \{a,b_\omega,c_\omega,d_\omega\}^*$ by:
$$\phi_\omega(x_{\omega'})=x_\omega,\;\phi_\omega(y_{\omega'})=y_\omega,\;\phi_\omega(z_{\omega'})=z_\omega,
\;\phi_\omega(a)=a y_{\omega}a.$$
Denote recursively $U_{k+1}^\omega=\phi_\omega(U_k^{\omega'})$ for $k\geqslant 1$.
In \cite{Muntyan-Dissertation-09}, Proposition II.24, Muntyan showed the following:
\begin{thm}\label{ThMuntyan} Let $\omega\in\Omega$ be non-almost-constant. Then $$G_\omega\cong \left\langle a,b,c,d:\bigcup\limits_{k\in\mathbb N\cup \{0\}} U_k^\omega\right\rangle.$$
\end{thm}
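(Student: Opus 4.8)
The plan is to run the scheme by which Lysionok~\cite{Lysionok-Grigorchuk-85} established that \eqref{EqGrigPres} is a presentation of $\mathcal G$, now keeping track of the shift $\omega\mapsto\omega'$; this is the route taken by Muntyan in~\cite{Muntyan-Dissertation-09}. Write $\widehat{G}_\omega=\langle a,b,c,d:\bigcup_k U_k^\omega\rangle$ and let $\pi_\omega\colon\widehat{G}_\omega\to G_\omega$ send $a,b,c,d$ to $a,b_\omega,c_\omega,d_\omega$; since $U_0^\omega$ presents $\Lambda\simeq\mathbb Z_2*(\mathbb Z_2\times\mathbb Z_2)$, the assertion is equivalent to saying that $\Delta_\omega$ is the normal closure in $\Lambda$ of $\bigcup_{k\geqslant 1}U_k^\omega$. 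I would first check that $\pi_\omega$ is a well-defined epimorphism, which is the easier half. The words in $U_0^\omega$ are the standard relations~\eqref{EqGomegaProperties}; the words in $U_1^\omega$ are verified by a direct computation with automorphisms of $T$, done separately for each of the three types~\eqref{EqOmegaTypes} of $\omega$ (for $\omega=\omega_0$ they are, under $0\leftrightarrow d$, $1\leftrightarrow c$, $2\leftrightarrow b$, the two non-standard seed relators of~\eqref{EqGrigPres}); and for $k\geqslant 1$ one uses that $\phi_\omega$ models the first-level section at the rightmost child of the root, so that, writing $\psi_\omega\colon\st_{G_\omega}(1)\hookrightarrow G_{\omega'}\times G_{\omega'}$ for the first-level decomposition coming from~\eqref{EqGenerators}, one has $\psi_\omega(\phi_\omega(u))=(\theta_\omega(u),\,u)$ for a homomorphism $\theta_\omega\colon G_{\omega'}\to G_{\omega'}$ (the complementary section map). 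Hence $\phi_\omega$ carries relators of $G_{\omega'}$ to relators of $G_\omega$, and since $U_{k+1}^\omega=\phi_\omega(U_k^{\omega'})$ while $\phi_\omega(U_0^{\omega'})$ consists of consequences of $U_0^\omega$, induction on $k$ produces $\pi_\omega$.

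The substantive part is to prove $\ker\pi_\omega=1$. Since $G_\omega$ maps onto $\mathbb Z_2$ by the parity of the number of occurrences of $a$, $\ker\pi_\omega$ lies in the index-$2$ subgroup $H_\omega\leqslant\widehat{G}_\omega$ generated by $b,c,d$ and their conjugates by $a$. Using the Reidemeister--Schreier presentation of $H_\omega$ together with the relators $U_1^\omega$ and $U_{k+1}^\omega=\phi_\omega(U_k^{\omega'})$, I would construct a homomorphism $\Psi_\omega\colon H_\omega\to\widehat{G}_{\omega'}\times\widehat{G}_{\omega'}$ lifting $\psi_\omega$, i.e.\ with $(\pi_{\omega'}\times\pi_{\omega'})\circ\Psi_\omega=\psi_\omega\circ(\pi_\omega|_{H_\omega})$, and check, by the same bookkeeping, that $\phi_\omega$ descends to an injective homomorphism $\widehat{G}_{\omega'}\hookrightarrow H_\omega$ admitting $\mathrm{pr}_2\circ\Psi_\omega$ as a one-sided inverse; thus $\widehat{G}_\omega$ carries the same self-similar architecture as $G_\omega$, compatibly with $\pi_\omega$. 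Then I would prove $\ker\pi_\omega=1$ by a descent on word length run simultaneously over the whole forward orbit $\omega,\omega',\omega'',\dots$. Taking $w$ of minimal length among the nontrivial elements of $\bigcup_{j\geqslant 0}\ker\pi_{\omega^{(j)}}$, one has $w\in H_\omega$ (say) and $\Psi_\omega(w)=(w_0,w_1)\in\ker\pi_{\omega'}\times\ker\pi_{\omega'}$; a length-contraction estimate for the iterated first-level decomposition of the groups $G_{\omega^{(j)}}$---uniform along the orbit once one passes to a bounded number of levels---shows that for $\ell(w)$ above an explicit threshold all the pieces of $w$ at that level are strictly shorter than $w$, hence trivial by minimality, so that $\Psi_\omega(w)=(1,1)$. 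Using the injectivity of $\phi_\omega$ and the identity $\Psi_\omega(\phi_\omega(u))=(\theta_\omega(u),u)$, one then rewrites $w$ modulo $\bigcup_k U_k^\omega$ as a word of length at most the threshold, after which the finitely many short words in $\ker\pi_\omega$ are killed directly by $U_0^\omega\cup U_1^\omega$---the type-dependent base case. This yields the desired contradiction, so $\pi_\omega$ is an isomorphism.

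The hard part will be the contraction step. A single first-level decomposition of $G_\omega$ need not strictly decrease length in the worst case (words dominated by $a$, or in which the section with an empty coordinate prevails), so to obtain a contraction coefficient $<1$ one must pass to a bounded number of levels; and because the forward shift orbit of a generic $\omega$ is infinite and aperiodic, both this estimate and the base-case analysis (three shapes of $U_1^\omega$, one for each type of $\omega^{(j)}$) have to be carried out uniformly along the orbit. The other delicate point is the passage from $\Psi_\omega(w)=(1,1)$ back to $w=1$: this is exactly where the iterated relators $U_k^\omega$, $k\geqslant 1$, and the injectivity of $\phi_\omega$ are genuinely used. Verifying that $\Psi_\omega$ and $\phi_\omega$ are well defined on the presented groups and annihilate precisely the relators $U_k^\omega$ is the remaining technical point, but it is routine once the seed relators $U_1^\omega$ are in hand.
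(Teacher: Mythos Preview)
The paper does not prove Theorem~\ref{ThMuntyan} at all: it is quoted as Proposition~II.24 of Muntyan's thesis~\cite{Muntyan-Dissertation-09} and used as a black box in the proof of Lemma~\ref{LmOmega}. So there is no ``paper's own proof'' to compare against, and your proposal is not a deviation from the paper but rather a sketch of the argument the paper outsources.

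That said, your outline is a faithful rendering of the Lysionok scheme adapted to the non-periodic setting, and it correctly identifies the two genuine issues that arise when $\omega$ is not periodic: the length-contraction estimate and the short-word base case must be made \emph{uniform along the forward shift orbit} $\omega,\omega',\omega'',\ldots$, since one cannot close the induction within a single group. Your diagnosis of the delicate step---that a single level of decomposition need not contract, so one must descend a bounded number of levels to get a coefficient strictly below $1$, and that the passage from $\Psi_\omega(w)=(1,1)$ back to $w=1$ is exactly what consumes the iterated relators $U_k^\omega$---is accurate. The one place where you are a bit glib is the claim that ``the finitely many short words in $\ker\pi_\omega$ are killed directly by $U_0^\omega\cup U_1^\omega$'': the threshold below which one stops the descent depends on how many levels one needed for contraction, and the relators available at that stage are correspondingly $U_0^\omega\cup\cdots\cup U_m^\omega$ for some bounded $m$, not just $U_0^\omega\cup U_1^\omega$. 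This is a bookkeeping point rather than a gap, but it is worth stating correctly.
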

\noindent Observe that when $\omega=012012\ldots$ the presentation for $G_\omega$ from Theorem \ref{ThMuntyan} coincides with the presentation \eqref{EqGrigPres}.
\begin{lem}\label{LmOmega} $\Delta_\omega$ is a normal subgroup of the commutator subgroup $\Lambda'$ and is isomorphic to a free group of infinite rank $\mathbb F_\infty$.
\end{lem}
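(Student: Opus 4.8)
The plan is to establish the statement in three steps: (a) $\Delta_\omega\subseteq\Lambda'$ (this is part of the claim, and together with $\Lambda'\cong\mathbb F_3$ from \lemref{LmLambda'} it will also make $\Delta_\omega$ free); (b) normality of $\Delta_\omega$ in $\Lambda'$; (c) $\Delta_\omega$ has infinite rank. Throughout, $\omega\in\Omega_2$, so $\omega$ is non-almost-constant and $G_\omega$ is infinite (it has intermediate growth by \thmref{ThGrowth}).

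For (a) I would compare abelianizations. Abelianizing the presentation of $\Lambda$ gives $\Lambda^{\mathrm{ab}}\cong(\mathbb Z_2)^3$, with $\hat a,\hat b,\hat c$ a basis and $\hat d=\hat b\hat c$ in $\Lambda^{\mathrm{ab}}$. On the other hand, in Muntyan's presentation $G_\omega=\langle a,b,c,d\mid\bigcup_{k\ge0}U_k^\omega\rangle$ (\thmref{ThMuntyan}) every relator in $U_k^\omega$ with $k\ge1$ is an even power of a word --- a fourth power, or a $2^n$-th power with $n\ge3$ --- and this property is preserved by the word substitution $\phi_\omega$ that produces $U_{k+1}^\omega$ from $U_k^{\omega'}$. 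Hence all relators with $k\ge1$ die in the abelianization, and $G_\omega^{\mathrm{ab}}$ equals the abelianization of $\langle a,b,c,d\mid a^2,b^2,c^2,d^2,bcd\rangle$, which is again $(\mathbb Z_2)^3$. Since $\psi$ is surjective it induces an epimorphism $\Lambda^{\mathrm{ab}}\twoheadrightarrow G_\omega^{\mathrm{ab}}$ between finite groups of equal order, hence an isomorphism; therefore $\Delta_\omega=\ker\psi$ lies in the kernel $\Lambda'$ of $\Lambda\to\Lambda^{\mathrm{ab}}$. Being a kernel, $\Delta_\omega$ is normal in $\Lambda$, a fortiori normal in $\Lambda'$ (step (b)), and as a subgroup of $\Lambda'\cong\mathbb F_3$ it is free by the Nielsen--Schreier theorem.

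For (c) observe first that $\Delta_\omega\ne\{1\}$: $\Lambda\cong\mathbb Z_2*(\mathbb Z_2\times\mathbb Z_2)$ contains a nonabelian free subgroup and so is non-amenable, whereas $G_\omega$ is amenable, so $\psi$ is not injective. Since $[\Lambda:\Lambda']=8$ is finite while $\Lambda/\Delta_\omega\cong G_\omega$ is infinite, $\Delta_\omega$ has infinite index in $\Lambda'\cong\mathbb F_3$. Now I would invoke the classical fact that a nontrivial normal subgroup of infinite index in a free group is never finitely generated; applied to $\Delta_\omega\trianglelefteq\Lambda'$ this shows $\Delta_\omega$ is infinitely generated. (Alternatively: $\Lambda$ is finitely presented whereas $G_\omega$ is not finitely presented --- see the remark after \thmref{ThMain} --- so $\Delta_\omega$ is not even finitely generated as a normal subgroup of $\Lambda$, hence not finitely generated.) Being a subgroup of the countable group $\Lambda'$, $\Delta_\omega$ is countable, so it is a countable free group of infinite rank, i.e. $\Delta_\omega\cong\mathbb F_\infty$.

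The only step needing input beyond routine group theory is (a), namely the computation $G_\omega^{\mathrm{ab}}\cong(\mathbb Z_2)^3$ that yields $\Delta_\omega\subseteq\Lambda'$; for this one either unwinds \thmref{ThMuntyan} as above or quotes the corresponding fact from \cite{Grigorchuk-DegreesGrowth-84}. Once this is in place, the heart of the statement --- that $\Delta_\omega$ has infinite rank --- follows formally, either from a standard property of subgroups of free groups or, equivalently, from the non-finite-presentability of $G_\omega$; so I expect (a) to be where the care is needed.
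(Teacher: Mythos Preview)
Your proof is correct, and for steps (b) and (c) it coincides with the paper's argument (normality as a kernel; infinite rank from the classical fact that a nontrivial normal subgroup of infinite index in a finite-rank free group is infinitely generated, which the paper cites as \cite{MagnusKarrasSolitar-CombinatorialGroup-66}, Theorem~2.10). You are also more explicit than the paper in checking $\Delta_\omega\neq\{1\}$.

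The one place where your route differs is step (a). The paper shows $\hat u\in\Lambda'$ for each relator $u\in U_k^\omega$ by a direct computation for $k=1$ (rewriting, e.g., $(\hat x\hat a\hat y\hat a)^4$ as a product of commutators) and then inducts on $k$ using that $\Lambda'$, being fully characteristic, is preserved by the endomorphism of $\Lambda$ induced by the substitution $\phi_\omega$. You instead observe that $\Lambda^{\mathrm{ab}}\cong(\mathbb Z_2)^3$ has exponent $2$, so every even power of any word lies in $\Lambda'$; since each relator in $U_k^\omega$, $k\ge1$, is visibly an even power and this is preserved by $\phi_\omega$, the inclusion $\Delta_\omega\subset\Lambda'$ follows at once. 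Your abelianization argument is shorter and avoids both the explicit commutator rewriting and the fully-characteristic step; the paper's approach, on the other hand, makes the commutator structure of the relators explicit, which could be useful if one wanted finer information about $\Delta_\omega$ inside $\Lambda'$.
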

\begin{proof} A word $\hat w\in \{\hat a,\hat b,\hat c,\hat d\}^*$ defines an element of $\Delta_\omega$ if and only if the word $w$ obtained by removing all hats from $\hat w$ defines the trivial element of $G_\omega$. Therefore, to show that $\Delta_\omega<\Lambda'$ it is sufficient to check that the following condition is satisfied:
\begin{align}\label{EqUkomegaCond}\begin{split}\text{for any}\;\;k\in\mathbb N\;\;\text{and any word}\;\;u\in U_k^\omega\\
 \text{the word}\;\;\hat u\in \{\hat a,\hat b,\hat c,\hat d\}^*\;\;\text{belongs to}\;\;\Lambda'.\end{split}\end{align} Here $\hat u$ stands for the word obtained from $u$ by adding hats to letters of $u$. For $k=1$ the condition \eqref{EqUkomegaCond} can be checked by direct computations.  For instance, for any permutation $(x,y,z)$ of $(b,c,d)$ for the first word of $U^1_\omega$ with $\omega$ of type $1)$ we have that $$(\hat x\hat a\hat y\hat a)^4=([\hat x,\hat a][\hat a,\hat z][\hat y,\hat a])^4 \in\Lambda'.$$
 Since the commutator subgroup $\Lambda'$ is a fully characteristic subgroup of $\Lambda$, it is invariant under the endomorphisms generated by substitutions.
 It follows by induction that the condition \ref{EqUkomegaCond} is satisfied for every $k\in\mathbb N$ and so $\Delta_\omega<\Lambda'$. As a kernel of a homomorphism, $\Delta_\omega$ is a normal subgroup.
 Since it is of infinite index, $\Delta_\omega$ is isomorphic to $\mathbb F_\infty$ (see \cite{MagnusKarrasSolitar-CombinatorialGroup-66}, Theorem 2.10).
\end{proof}
Let us recall the notion of a \emph{verbal subgroup}. For details we refer the reader \eg to \cite{Neumann-VariatiesGroups-67}. Let $\mathcal W\subset \mathcal A^*$ be a set of finite words over an alphabet $\mathcal A$ and $G$ be any group. For any map $\phi:\mathcal A\to G$ and any word $w=x_1\ldots x_k\in\mathcal W$ denote by $g_{\phi,w}$ the element of $G$ obtained by replacing each letter of $w$ by its image under $\phi$:
$$g_{\phi,w}=\phi(x_1)\cdots\phi(x_k).$$ The verbal subgroup of $G$ defined by $\mathcal W$ is the group generated by all words of the form $g_{\phi,w}$. Let us state the following result of Ol'$\check{\text{s}}$anski$\breve{\i}$ \cite{Olshanski-CharacteristicSubgroups-74}:
\begin{thm}\label{ThOls} The group $F_\infty$ has a continuum of distinct verbal subgroups $V_i, i\in I$, such that the quotient $F_\infty/V_i$ is locally finite and solvable for every $i\in I$.
\end{thm}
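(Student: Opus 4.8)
The plan is to pass to the language of varieties of groups. Recall that the map sending a variety $\mathfrak{V}$ to its verbal subgroup of $F_\infty$ is a lattice anti-isomorphism onto the lattice of verbal (equivalently, fully invariant) subgroups of $F_\infty$, and that for such a subgroup $V$ the quotient $F_\infty/V$ is the relatively free group of countably infinite rank of the corresponding variety $\mathfrak{V}$. Hence $F_\infty/V$ is locally finite precisely when $\mathfrak{V}$ is a locally finite variety, and $F_\infty/V$ is solvable precisely when $\mathfrak{V}$ is contained in the variety $\mathfrak{A}^{d}$ of solvable groups of derived length at most $d$ for some $d$. Moreover a locally finite variety automatically has bounded exponent, because its relatively free group of rank one is a finite cyclic group. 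So the theorem is equivalent to the existence, for some fixed $d$ and $e$, of a continuum of pairwise distinct subvarieties of $\mathfrak{A}^{d}\cap\mathfrak{B}_{e}$, where $\mathfrak{B}_{e}$ denotes the variety of groups of exponent dividing $e$.

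Inside such an ambient variety local finiteness is automatic, by a short induction on the derived length: a finitely generated group $G$ of exponent dividing $e$ and derived length at most $d+1$ has $G/G^{(d)}$ finitely generated of exponent dividing $e$ and derived length at most $d$, hence finite by induction (the base case being a finitely generated abelian group of bounded exponent); then $G^{(d)}$ has finite index in $G$, so it is finitely generated, and being abelian and torsion it is finite, so $G$ is finite. Consequently every subvariety of $\mathfrak{A}^{d}\cap\mathfrak{B}_{e}$ is locally finite and solvable of derived length at most $d$, and the entire problem reduces to exhibiting a continuum of distinct such subvarieties.

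This last step is the technical heart and the step I expect to be the main obstacle; it amounts to the construction of an \emph{independent family} (this is the content of \cite{Olshanski-CharacteristicSubgroups-74}). One wants a sequence $G_1,G_2,\dots$ of finite $p$-groups, all of exponent dividing a fixed $e=p^{k}$ and of derived length at most a fixed small $d$ (one may take $d=3$, realized by centre-by-metabelian $p$-groups), together with group words $u_1,u_2,\dots$ such that $u_m$ is a law of $G_n$ whenever $n\ne m$ but $u_m$ fails in $G_m$. The groups $G_n$ are built as carefully chosen quotients of a relatively free group of $\mathfrak{A}^{d}\cap\mathfrak{B}_{e}$, with $u_m$ encoding a relation among iterated commutators that is forced in $G_n$ for $n\ne m$ by a dimension/gap phenomenon in the associated graded Lie ring yet can be violated in the single group $G_m$; equivalently, one transports to $p$-groups the known existence of a continuum of varieties of nilpotent restricted Lie algebras of bounded class over $\mathbb{F}_p$. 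The verification that such $G_n$ and $u_m$ can be produced — the bookkeeping of the graded pieces and of the gap conditions that makes the family genuinely independent — is the delicate part. Granting it, set $\mathfrak{V}_M=\mathrm{var}\bigl(\{G_n:n\in M\}\bigr)$ for $M\subseteq\mathbb{N}$. Each $\mathfrak{V}_M$ lies in $\mathfrak{A}^{d}\cap\mathfrak{B}_{e}$, hence is locally finite and solvable of derived length at most $d$; and if $M\ne M'$, say $m\in M'\setminus M$, then $u_m$ is a law of $\mathfrak{V}_M$ (it holds in every $G_n$ with $n\in M$, since then $n\ne m$) but fails in $G_m\in\mathfrak{V}_{M'}$, so $\mathfrak{V}_M\ne\mathfrak{V}_{M'}$. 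The associated verbal subgroups $V_M$ of $F_\infty$ thus form a continuum of pairwise distinct verbal subgroups with every $F_\infty/V_M$ locally finite and solvable, as required.
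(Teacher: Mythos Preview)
The paper does not prove this theorem at all: it is stated as a result of Ol'\v{s}anski\u{\i} and simply cited from \cite{Olshanski-CharacteristicSubgroups-74}. So there is no ``paper's own proof'' to compare against; what you have written is an outline of the argument underlying the cited reference.

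Your reduction is sound. The correspondence between varieties and verbal subgroups of $F_\infty$ is standard (Birkhoff/Neumann), and the observation that a subvariety of $\mathfrak{A}^d\cap\mathfrak{B}_e$ is automatically locally finite --- via the induction on derived length you give --- is correct and cleanly stated. You are also right that the whole content collapses to producing an independent family $(G_n,u_m)$ of finite $p$-groups of bounded exponent and bounded derived length, and that once this is in hand the varieties $\mathfrak{V}_M=\mathrm{var}\{G_n:n\in M\}$ for $M\subseteq\mathbb{N}$ give $2^{\aleph_0}$ distinct verbal subgroups with the required properties.

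The one point to flag is that you explicitly ``grant'' the construction of the independent family, and that construction \emph{is} the theorem: everything else is formal. So as a self-contained proof your write-up has a genuine gap at exactly the place Ol'\v{s}anski\u{\i}'s work enters; as a proof sketch explaining why the cited result yields the statement, it is accurate and well organized. For the purposes of this paper, which only invokes the result as a black box, your level of detail already exceeds what is needed.
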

\noindent Let us identify $F_\infty$ with $\Delta_\omega$ and view $V_i,$ $i\in I$, from Theorem \ref{ThOls} as subgroups of $\Delta_\omega$. For every $i$, since $V_i$ is a verbal subgroup of $\Delta_\omega$ and $\Delta_\omega$ is a normal subgroup of $\Lambda$ we obtain that $V_i$ is a normal subgroup of $\Lambda$. Since $\Lambda/\Delta_\omega\cong G_\omega$ and $\Delta_\omega/V_i$ are amenable, $\Lambda/V_i$ is also amenable for every $i\in I$. Notice that each of the groups $\Lambda/V_i$ satisfies the conditions of Theorem \ref{PropGrigSpec}. We obtain that their spectra all coincide with $[-\tfrac{1}{2},0]\cup[\tfrac{1}{2},1]$. Since $\Lambda$ is finitely generated, among the groups $\Lambda/V_i$ there is a continuum of pairwise non-isomorphic groups. This finishes the proof of Theorem \ref{ThMain}.

%%%%%%%%%%%%%%%%%%%%%%%%%%%%%%%%%%%%%%%%%%%
\section{Proof of Theorem \ref{ThSpec[-1,1]}}
Using the construction of P. Hall from \cite{Hall-Finiteness-54}, proofs of Theorems 7 and 8, we obtain:
\begin{prop}\label{ThContinuum2gen} There is a continuum of $2$-generated torsion-free step-3 solvable groups with all relations of even length.
\end{prop}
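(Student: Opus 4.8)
The plan is to follow P. Hall's construction from \cite{Hall-Finiteness-54} (proofs of Theorems 7 and 8) essentially verbatim, while keeping track of two extra features that Hall does not explicitly record: that the group is $2$-generated and torsion-free, and that every defining relation can be taken to have even length. Recall that Hall produces, for a suitable choice of parameters, a continuum of non-isomorphic metabelian-by-abelian (step-$3$ solvable) groups by building an abelian group $A$ (a suitable module over a group ring) and forming an extension $1\to A\to G\to Q\to 1$ with $Q$ abelian; the continuum comes from varying an infinite set of primes or exponents used in the module construction. First I would recall that Hall's groups can be realised as quotients of a fixed finitely generated free metabelian-by-abelian group, or more concretely of $\mathbb F_2$ inside the variety of step-$3$ solvable groups; the point is that all the groups in the family are $2$-generated with the \emph{same} generating pair, so that isomorphism type is the only thing varying.

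The key steps, in order, are as follows. (i) Set up the free step-$3$ solvable group $F=\mathbb F_2/\mathbb F_2^{(3)}$ on generators $x,y$, and recall that $F$ is torsion-free (this is classical: free solvable groups are torsion-free). (ii) Inside $F$ isolate the abelian normal subgroup $A=F^{(2)}$ (the second derived subgroup); it is a torsion-free module over $\mathbb Z[F/F^{(2)}]$. (iii) Mimic Hall's choice: pick an infinite set $P$ of primes and, for each subset $P'\subseteq P$, impose relations that kill a prescribed collection of elements of $A$ in a way controlled by $P'$, obtaining a quotient $G_{P'}$ which is still step-$3$ solvable and $2$-generated. Hall's argument that the resulting groups are pairwise non-isomorphic (for a continuum of choices of $P'$) carries over unchanged, since it depends only on invariants of the module $A/(\text{relations})$. (iv) Check torsion-freeness is preserved: because the relations are chosen inside the torsion-free abelian group $A$ and only prescribe that certain module elements vanish, the quotient of $A$ stays torsion-free provided we quotient by an \emph{isolated} (pure) submodule — so I would take the isolator of Hall's relation submodule, which changes nothing essential in his non-isomorphism count. (v) Finally, verify the even-length condition: replace each defining relator $r$ (a word in $x^{\pm1},y^{\pm1}$) by $r$ itself if it already has even length, and otherwise note that the total exponent-sum of any relator of a torsion-free step-$3$ solvable group quotient of $\mathbb F_2$ that we use is automatically even — more robustly, one may add the single relator $[x,y]^{2}\cdot[x,y]^{-2}$ or simply observe that all of Hall's relators are iterated commutators and commutator-like words, which have exponent sum $0$ in each generator and hence even total length. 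If some relator happens to have odd length, rewrite it by inserting a trivially cancelling pair $ss^{-1}$; this does not change the group and fixes the parity. So the relations may all be taken of even length.

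The main obstacle I expect is step (iv) together with the parity bookkeeping in step (v): one must make sure that passing to the isolator (to keep $G_{P'}$ torsion-free) does not collapse distinct $P'$ to isomorphic groups, i.e. that Hall's separating invariants survive the purification; and one must confirm that \emph{after} all these modifications a presentation with all relators of even length still exists. Both are bookkeeping rather than conceptual difficulties — Hall's invariants are computed from the rational module $A\otimes\mathbb Q$ (hence insensitive to isolators), and any group admitting a presentation at all admits one with all relators of even length once we are allowed to insert cancelling letters — but they are the places where care is genuinely required, so I would write them out in detail rather than leaving them to the reader. The even-length feature is exactly what will later be needed (in the companion argument using the Higson--Kasparov theorem) to force the spectrum to be symmetric and fill out all of $[-1,1]$.
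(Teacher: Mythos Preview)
Your approach diverges from the paper's in its concrete realisation of Hall's idea. The paper does not work inside the free step-$3$ solvable group and does not vary a set of primes; instead it builds a single explicit group $K$ as follows: let $A$ be the free nilpotent group of class $2$ on generators $\{x_i\}_{i\in\mathbb Z}$, form $B=A\rtimes_\varphi\mathbb Z$ via the shift $\varphi(x_i)=x_{i+1}$ (so $B$ is $2$-generated by $x_0$ and $\varphi$), and set $K=B/C$ where $C$ is the normal closure of the elements $[x_i,x_j][x_k,x_l]^{-1}$ with $i-j=k-l$. Then $Z(K)$ is free abelian of infinite rank, so it has a continuum of subgroups $H$, each normal in $K$; the quotients $K/H$ are torsion-free step-$3$ solvable, and since all the imposed relators are products of commutators they automatically have even length. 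This route sidesteps both of your bookkeeping worries: no isolators are needed (torsion-freeness is built into the construction), and the continuum comes from the lattice of subgroups of an infinite-rank abelian group rather than from prime sets, with non-isomorphism handled by the countability of $\aut$ of a finitely generated group.

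Your proposal also contains a genuine error in step~(v). Inserting a cancelling pair $ss^{-1}$ increases the word length by $2$, so it does \emph{not} change the parity; an odd-length relator stays odd. More to the point, the property you actually need --- existence of a homomorphism $G\to\mathbb Z/2\mathbb Z$ sending every generator to $1$ --- is intrinsic to the pair $(G,S)$ and cannot be manufactured by rewriting a presentation: it holds iff every relator has even exponent sum, which is invariant under free reduction and insertion of cancelling pairs. Fortunately your primary observation already suffices: all relators in your construction lie in $\mathbb F_2'$ (indeed in $\mathbb F_2^{(2)}$ or $\mathbb F_2^{(3)}$), hence have exponent sum zero in each generator and thus even length. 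So delete the $ss^{-1}$ fallback entirely; with that correction your outline is workable, though the paper's explicit construction is considerably cleaner.
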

\begin{proof} Introduce the group $$A=\langle x_i,i\in\mathbb Z:[y_1,[y_2,y_3]]=1,y_1,y_2,y_2\in \{x_i,x_i^{-1}:i\in\mathbb Z\}^*\rangle,$$ where as before $\mathcal A^*$ denotes the set of all finite words over the alphabet $\mathcal A$.  The group $A$ is a free nilpotent group of nilpotency class 2 and of infinite rank. Let $\varphi:A\to A$ be the automorphism of $A$ induced by the shift map on the set of generators: $\varphi(x_i)=x_{i+1},i\in\mathbb Z$. Define the semi-direct product $B=A\rtimes_\varphi \mathbb Z$. Notice that $B$ is generated by $x_0$ and $\varphi$. Consider the subgroup $C$ of $B$ generated by the elements $$[x_i,x_j][x_k,x_l]^{-1},\;\;i,j,k,l\in\mathbb Z,\;\;i-j=k-l.$$ Clearly, $C$ is a normal subgroup of $B$. Let $K=B/C$. Then the center $Z(K)$ of $K$ is an abelian group of infinite rank generated by the commutators $[x_0,x_i],i\in\mathbb Z$. Therefore, $Z(K)$ has a continuum of subgroups.

Further, each subgroup $H<Z(K)$ is normal in $K$. Moreover the quotient $K/H$ is torsion free and solvable. Represent $K/H$ as $F_2/N$ such that the generators $\bar{x}_0$ and $\bar\varphi$ of $F_2$ are mapped to the classes of $x_0$ and $\varphi$ in $K/H$, respectively. The relations in $F_2/N$ are all of even length since they are products of commutators. Since the group of automorphisms of a finitely generated group is at most countable, there exists a continuum of pairwise non-isomorphic groups of the form $K/H$, $H<Z(K)$. This finishes the proof of Proposition \ref{ThContinuum2gen}
\end{proof}

Recall that for a countable group $G$ the regular representation $\lambda_G$ is defined on the Hilbert space $l^2(G)$ by
$$(\lambda_G(g)f)(h)=f(g^{-1}h),\;\;f\in l^2(G),\;\;h,g\in G.$$ Further, we use Proposition 3.7 from \cite{BartholdiGrigorchuk-Spectrum-00}:
\begin{prop}\label{PropHigsonKasparov} Let $\Gamma$ be a torsion-free amenable group with finite generating set $S=S^{-1}$ such that there is a map $\phi:\Gamma\to \mathbb Z/2\mathbb Z$ with $\phi(S)=1$. Let $\lambda_\Gamma$ be the regular representation of $\Gamma$. Then
$$\sigma(\sum\limits_{s\in S}\lambda_\Gamma(s))=[-|S|,|S|].$$
\end{prop}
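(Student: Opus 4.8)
The plan is to handle the two endpoints $\pm|S|$ of the interval and the absence of an interior gap separately. I would set $A=\sum_{s\in S}\lambda_\Gamma(s)$; since $S=S^{-1}$ this is a self-adjoint element of $C^*_r(\Gamma)$ with $\|A\|\le|S|$, so $\sigma(A)$ is a closed subset of $[-|S|,|S|]$. That $|S|$ itself lies in $\sigma(A)$ I would get from amenability in the form of Hulanicki's Theorem~\ref{ThHulanicki} (equivalently, Kesten's criterion): amenability of $\Gamma$ gives $1_\Gamma\prec\lambda_\Gamma$, hence $\sigma(1_\Gamma(m))\subset\sigma(\lambda_\Gamma(m))$ for $m=\sum_{s\in S}s\in\mathbb C[\Gamma]$, and $1_\Gamma(m)=|S|$, so $|S|\in\sigma(A)$.

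To obtain the other endpoint I would exploit the homomorphism $\phi\colon\Gamma\to\mathbb Z/2\mathbb Z$ with $\phi(S)=1$ by introducing the self-adjoint unitary $U$ on $l^2(\Gamma)$ given by $(Uf)(g)=(-1)^{\phi(g)}f(g)$. For $s\in S$ one has $\phi(s^{-1}g)=\phi(g)+\phi(s)=\phi(g)+1$, whence a one-line computation yields $U\lambda_\Gamma(s)U=-\lambda_\Gamma(s)$ for all $s\in S$, and therefore $UAU=-A$. Consequently $\sigma(A)=-\sigma(A)$ is symmetric about $0$, and in particular $-|S|\in\sigma(A)$ as well.

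To rule out an interior gap I would argue by contradiction: if $\sigma(A)\ne[-|S|,|S|]$, pick $\lambda_0\in(-|S|,|S|)\setminus\sigma(A)$, so that $\sigma(A)$ is the disjoint union of the two nonempty compact sets $\sigma_+=\sigma(A)\cap(\lambda_0,|S|]\ni|S|$ and $\sigma_-=\sigma(A)\cap[-|S|,\lambda_0)\ni-|S|$. Choosing a continuous real function $f$ equal to $1$ near $\sigma_+$ and to $0$ near $\sigma_-$ and setting $P=f(A)$, the continuous functional calculus makes $P$ an orthogonal projection in $C^*_r(\Gamma)$ with $\|P\|=\|1-P\|=1$, hence $P\notin\{0,1\}$. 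But an amenable group has the Haagerup property, so the Baum--Connes conjecture holds for $\Gamma$ by the Higson--Kasparov theorem \cite{Higson-Kasparov-OperatorKTheory-97}; since $\Gamma$ is torsion free, this forces the Kadison--Kaplansky conjecture for $\Gamma$, i.e.\ $C^*_r(\Gamma)$ has no projections other than $0$ and $1$ (the assembly isomorphism identifies $K_0(C^*_r(\Gamma))$ with $K_0(B\Gamma)$, and composing with the faithful canonical trace $\tau(a)=\langle a\delta_e,\delta_e\rangle$ gives $\tau_*(K_0(C^*_r(\Gamma)))\subset\mathbb Z$ by Atiyah's $L^2$-index theorem, so any projection $P$ has $\tau(P)\in\mathbb Z\cap[0,1]=\{0,1\}$ and hence $P\in\{0,1\}$). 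This contradicts $P\notin\{0,1\}$, and therefore $\sigma(A)=[-|S|,|S|]$.

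The only genuinely deep ingredient is the final step — the implication that Baum--Connes for a torsion-free group forbids nontrivial projections in the reduced $C^*$-algebra — and it is there that both hypotheses, torsion-freeness and amenability (which supplies the Haagerup property needed for Higson--Kasparov), are essential. The endpoint $|S|$ uses amenability only in the elementary guise of Kesten's/Hulanicki's theorem, and the bipartite twist by $\phi$ together with the spectral-gap/functional-calculus argument is routine; so I would expect no obstacle beyond the invocation of Higson--Kasparov and Kadison--Kaplansky.
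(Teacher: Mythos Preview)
The paper does not actually prove this proposition: it is quoted verbatim as Proposition~3.7 from \cite{BartholdiGrigorchuk-Spectrum-00} and used as a black box in the proof of Theorem~\ref{ThSpec[-1,1]}. So there is no in-paper argument to compare against.

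That said, your argument is correct and is in fact the standard one (and essentially the one in \cite{BartholdiGrigorchuk-Spectrum-00}). The three ingredients line up exactly as you describe: amenability gives $|S|\in\sigma(A)$ via Kesten/Hulanicki; the character $\phi$ produces a self-adjoint unitary conjugating $A$ to $-A$, hence $-|S|\in\sigma(A)$; and connectedness of $\sigma(A)$ follows because any gap would produce a nontrivial spectral projection in $C^*_r(\Gamma)$, contradicting Kadison--Kaplansky for torsion-free $\Gamma$, which in turn is a consequence of the Higson--Kasparov verification of Baum--Connes for amenable (indeed a-T-menable) groups. Your care in first securing \emph{both} endpoints $\pm|S|$ before invoking the gap argument is exactly what is needed: without the symmetry one could not rule out a connected spectrum of the form $[\alpha,|S|]$ with $\alpha>-|S|$, since that case yields no nontrivial projection.
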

\noindent Let $G$ be a $2$-generated torsion free solvable group with all relations of even length. Since $G$ is countable and soluble, it is amenable. Thus, the group $G$ satisfies the conditions of Proposition \ref{PropHigsonKasparov}. Therefore, $\spec(G)=[-1,1]$. Applying Proposition \ref{ThContinuum2gen} we finish the proof of Theorem \ref{ThSpec[-1,1]}.

\section{Remarks and open questions}
Given a finitely generated group $G$  together with the spectrum of $G$ it is natural to consider spectral measures of the Markov operator $M$. Since $M$ is a self-adjoint operator on $l^2(G)$ it admits a spectral decomposition
$$M=\int\limits_{\sigma(M)}\dd E(\lambda),$$ where $E(\lambda)$ is a projection-valued measure. Introduce the spectral measure $\mu$ of $M$ by
$$\mu(A)=(E(A)\delta_e,\delta_e),$$ where $A\subset \mathbb R$ is an $E$-measurable set, $\delta_e\in l^2(G)$ is the delta-function of the unit element in $G$ and $(\cdot,\cdot)$ is the scalar product of $l^2(G)$. Then $\supp(\mu)=\sigma(M)=\spec(G)$ and $\mu$ may contain more information about $G$ than just the spectrum $\spec(G)$ itself (see \cite{Kesten-RandomWalk-59}). Therefore, a natural question is:
\begin{Quest}\label{QuestSpectralMeas} Is it possible to determine the Cayley graph of a finitely generated group $G$ up to isometry  from the spectral measure $\mu$ of the associated Markov operator?
\end{Quest}\noindent The above question is wide open. In particular, we do not know what would be the answer if we restrict our attention to the groups considered in the present paper.
\begin{Quest}\label{QuestMeasGrig}
 $a)$ Is  it  correct  that for any two groups  from  the  family  $G_\omega$,  $\omega\in\Omega$, with  non-isometric  Cayley  graphs   the  corresponding  spectral  measures  are distinct? $b)$ Same question for the groups constructed by P. Hall and used in the proof of Theorem \ref{ThSpec[-1,1]}.
\end{Quest}\noindent
Another natural question (already mentioned in the Introduction) is whether or not Theorem \ref{ThGraphCov} (Weak Hulanicki Theorem for graphs) can be proven in the full generality:
\begin{Quest}\label{QuestHulanicki} Let $\Gamma_1$ be an amenable uniformly bounded connected weighted graph which covers a weighted graph $\Gamma_2$. Let $H_1,H_2$ be the Laplace type operators associated with $\Gamma_1$ and $\Gamma_2$, respectively. Is it true that $\sigma(H_2)\subset \sigma(H_1)$?
\end{Quest}
%%%%%%%%%%%%%%%%%%%%%%%%%%%%%%
\section{Acknowledgements}
We are grateful to the anonymous referee for careful reading of our paper and numerous useful comments and suggestions.
%%%%%%%%%%%%%%%%%%%%%%%%%%%%%%%%%%%%%%%%%%%%%

\bibliographystyle{ijmart}
\bibliography{Bibliography}
\end{document}